\newtheorem{theorem}{Theorem}
\newtheorem{lemma}{Lemma}
\newtheorem{corollary}{Corollary}
\newtheorem{proposition}{Proposition}
\newtheorem{remark}{Remark}
\newenvironment{proof}{\begin{trivlist}
    \item[\hskip\labelsep{\it Proof.}]}{$\hfill\Box$\end{trivlist}}
\newcommand{\satop}[2]{\stackrel{\scriptstyle{#1}}{\scriptstyle{#2}}}
\begin{document}

\title{Numerical integration of H\"older continuous, absolutely convergent  Fourier-, Fourier cosine-, and Walsh series}

\author{Josef Dick\footnote{School of Mathematics and Statistics, The University of New South Wales, Sydney, 2052 NSW, Australia; email: josef.dick@unsw.edu.au}}

\date{}
\maketitle

\begin{abstract}
We introduce quasi-Monte Carlo rules for the numerical integration of functions $f$ defined on $[0,1]^s$, $s \ge 1$, which satisfy the following properties: the Fourier-, Fourier cosine- or Walsh coefficients of $f$ are absolutely summable and $f$ satisfies a H\"older condition of order $\alpha$, for some $0 < \alpha \le 1$. We show a convergence rate of the integration error of order $\max((s-1) N^{-1/2}, s^{\alpha/2} N^{-\alpha} )$. The construction of the quadrature points is explicit and is based on Weil sums.
\end{abstract}

{\bf Key words:} Numerical integration, quasi-Monte Carlo, Weil sum, H\"older continuity, Fourier series, Fourier cosine series, Walsh series;

{\bf MSC Class:} 65D30, 65D32, 65C05, 65C10

\section{Introduction}

We study numerical approximation of high dimensional integrals $\int_{[0,1]^s} f(\boldsymbol{x}) \,\mathrm{d} \boldsymbol{x}$ by means of equal weight quadrature rules $\frac{1}{N} \sum_{n=0}^{N-1} f(\boldsymbol{x}_n)$ called quasi-Monte Carlo rules. A number of constructions of good quadrature points have been introduced, see \cite{DKS13, DP10, niesiam} for an overview. For these constructions many bounds on the integration error for various cases have been established, showing optimal convergence rates of the form $C_s (\log N)^{p(s)} N^{-1}$, where $p(s)$ grows linearly with the dimension, or of the form $C_{s,\delta} N^{-\delta}$, for all $0 < \delta < 1$, where $C_{s,\delta} > 0$ goes to $\infty$ as $\delta$ approaches $0$ and where $C_{s, \delta}$ grows exponentially fast with the dimension $s$. Since these bounds yield an exponential dependence on the dimension $s$, they are not informative if $N$ is much smaller than, say, $2^s$. To circumvent this situation, Sloan and Wo\'zniakowski \cite{SW98} 
introduced so-called weighted function spaces to remove the 
exponential dependence on the dimension and obtain meaningful upper bounds on the integration error even when $N$ is much smaller than $2^s$. The idea is to only consider 
functions for which the importance of variables decreases as the index increases. Furthermore, for the spaces considered in \cite{SW98}, upper bounds which do not depend on the dimension, or do so only polynomially, can only be obtained if such weights are introduced. The topic of tractability in its various forms studies the dependence on the dimension in detail, see the monographs of Novak and Wo\'zniakowski \cite{NW08, NW10, NW12}. 

Another problem in QMC theory is that integrands arising in applications are often not smooth, so that classical bounds on the integration error do not apply. For instance, integrands arising from financial applications are sometimes of the form $\max(0, \Phi(\boldsymbol{x}))$, which do not have bounded variation or finite norm in the function spaces considered in classical theory (see \cite{GKS13} where this issue is treated with some finance problems in mind).

To widen the scope of applications of QMC rules, it is therefore interesting to study function classes which are not weighted as in, say, \cite{SW98}, and which include non-smooth functions. The aim of this paper is to address numerical integration of some particular function classes which are not weighted and which contain non-smooth functions. We introduce explicit constructions of high dimensional quadrature point sets which work well in QMC rules in the range $s^2 < N < 2^s$. We prove an error bound of order $\max((s-1) N^{-1/2}, s^{\alpha/2} N^{-\alpha})$. The requirements on the integrand are: The Fourier coefficients, the Fourier cosine coefficients, or the Walsh coefficients of the integrand are absolutely summable and that the integrand satisfies a H\"{o}lder condition of order $0 < \alpha \le 1$.

In practice, situations where $N$ is much smaller than $2^s$ occur for various reasons. For instance, if the dimension $s$ is very large (say, $s=360$ as in \cite{PT}) then one can not use $2^s$ points (note $2^{360} \approx 10^{108}$) so that $N$ is much smaller than $2^s$. Another example is from \cite{KSS} where an infinite dimensional integration problem comes up which is then truncated. In this case the truncation error depends on the dimension, so that one has to balance the integration error with the truncation error. This yields a situation where $N \approx s^r$ for some small value of $r > 0$.  

Another result which does not require weighted function spaces when $N$ is much smaller than $2^s$ concerns the so-called star-discrepancy $D^\ast(P_{N,s})$ for an $N$-element point set $P_{N,s} \subset [0,1]^s$. The star-discrepancy is a measure for the distribution properties of a point set. It appears in the upper bound on the integration error for functions $f$ of bounded variation $V(f) < \infty$ in the sense of Hardy and Krause 
\begin{equation*}
\left|\int_{[0,1]^s} f(\boldsymbol{x}) \,\mathrm{d} \boldsymbol{x} - \frac{1}{N} \sum_{\boldsymbol{x} \in P_{N,s} } f(\boldsymbol{x}) \right| \le D^\ast(P_{N,s}) V(f).
\end{equation*}
This is the Koksma-Hlawka inequality, see \cite{Hlawka, Koksma} or also \cite{DP10, KN, niesiam}. It has been shown in \cite{HNWW} that there exist point sets in $[0,1]^s$ whose star-discrepancy is of order $\sqrt{s N^{-1}}$. Thus, even an integral in $100$ or more dimensions can reasonably be approximated using such a point set. The caveat though is, that no explicit constructions of such point sets are known. The existence proofs rely on concentration inequalities (like Hoeffdings inequality). Furthermore the scope of computer search algorithms for finding good point sets, see \cite{DGW10}, is limited since the computation of the star-discrepancy is NP-hard \cite{GSW09}.

From a practical point of view high dimensional numerical integration problems remain difficult, especially if there is no clear weighting of different coordinate directions and the functions are not smooth. In the following we describe some QMC rules which achieve a rate of order $\max((s-1) N^{-1/2}, s^{\alpha/2} N^{-\alpha} )$ for function classes with absolutely converging series expansions which satisfy a H\"older condition of order $0 < \alpha \le 1$.

\section{The method}

Our method is based on a bound from A. Weil~\cite{Weil} on exponential sums.  A proof of this result can for instance be found in  \cite[Theorem~5.36 on p. 220, Theorem~5.37 on p. 222 and Theorem~5.38 on p. 223]{LN97}, where it is referred to as `Weil sum'. See also \cite{CU57}, where some of the conditions have been relaxed.

To state this result, we write $\mathbb{F}_{b^m}$ for the finite field with $b^m$ elements, where $b$ is a prime and $m \ge 1$. An additive character of $\mathbb{F}_{b^m}$ is a function $\psi: \mathbb{F}_{b^m} \to T = \{z \in \mathbb{C}: |z|=1\}$ with the property that $\psi(x+y) = \psi(x) \psi(y)$ for all $x, y \in \mathbb{F}_{b^m}$. This implies that $\psi(0) = 1$. Further, there is a character which is given by $\psi(x) = 1$ for all $x \in \mathbb{F}_{b^m}$. This character is called the trivial character or principal character. The set of all polynomials in $z$ with coefficients in $\mathbb{F}_{b^m}$ is denoted by $\mathbb{F}_{b^m}[z]$. We state a version of Weil's result in the following.
\begin{proposition}[Weil sum]\label{WeilA}
Let $\mathbb{F}_{b^m}$ be a finite field with $b^m$ elements, where $b$ is prime and $m \ge 1$. Let $f \in \mathbb{F}_{b^m}[z]$ be a nonzero polynomial of degree $s \ge 1$ and let $\psi$ be a non-trivial additive character of $\mathbb{F}_{b^m}$. Then, provided that
\begin{equation}\label{cond_sN}
s < b^m \quad \mbox{and} \quad \mathrm{gcd}(s, b^m) = 1,
\end{equation}
we have
\begin{equation}\label{Weil_bound}
\left|\sum_{z \in \mathbb{F}_{b^m} } \psi(f(z)) \right| \le (s-1) \sqrt{b^m}.
\end{equation}
\end{proposition}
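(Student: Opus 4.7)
The plan is to derive the estimate from the Hasse--Weil theorem on rational point counts of curves over finite fields. The first step is a reduction to the canonical additive character of $\mathbb{F}_{b^m}$. Every nontrivial additive character has the form $x \mapsto \psi_1(\mathrm{Tr}(ax))$ for some nonzero $a \in \mathbb{F}_{b^m}$, where $\mathrm{Tr}\colon \mathbb{F}_{b^m} \to \mathbb{F}_b$ is the trace and $\psi_1$ is a fixed nontrivial character of $\mathbb{F}_b$. Absorbing $a$ into $f$ (i.e., replacing $f(z)$ by $a f(z)$, which preserves the degree $s$ and the coprimality condition \eqref{cond_sN}), the task reduces to bounding
\[
S(g) \;=\; \sum_{z \in \mathbb{F}_{b^m}} \psi_1(\mathrm{Tr}(g(z)))
\]
for polynomials $g$ of degree $s$ coprime to $b$.

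The second step is to introduce the Artin--Schreier curve $C\colon y^b - y = g(z)$. Since the image of $y \mapsto y^b - y$ on $\mathbb{F}_{b^m}$ equals $\ker \mathrm{Tr}$, character orthogonality yields
\[
\#C_{\mathrm{aff}}(\mathbb{F}_{b^m}) \;=\; b^m \,+\, \sum_{t \in \mathbb{F}_b^\ast} S(t g).
\]
The coprimality $\gcd(s,b)=1$ ensures that on the smooth projective model of $C$ there is a single point over $z = \infty$, and Riemann--Hurwitz delivers genus $g_C = (b-1)(s-1)/2$. The Hasse--Weil bound $|\#C(\mathbb{F}_{b^m}) - (b^m + 1)| \le 2 g_C \sqrt{b^m}$ thereby produces the aggregate estimate $\bigl|\sum_{t \in \mathbb{F}_b^\ast} S(t g)\bigr| \le (b-1)(s-1)\sqrt{b^m}$.

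The main obstacle is that this aggregate does not by itself bound a single summand $S(g)$; a priori all of the mass could concentrate in one term, which would lose a factor of $b-1$. To extract the desired per-character estimate I would invoke the finer statement that the zeta function of $C$ factors along the $\mathbb{F}_b$-action as
\[
Z(C,T) \;=\; \frac{\prod_{t \in \mathbb{F}_b^\ast} L(tg,T)}{(1-T)(1-b^m T)},
\]
where each L-polynomial $L(tg,T) = \prod_{j=1}^{s-1}(1 - \alpha_{t,j} T)$ has degree exactly $s-1$ and every reciprocal root $\alpha_{t,j}$ satisfies $|\alpha_{t,j}| = \sqrt{b^m}$ by the Riemann hypothesis for curves. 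Since $S(tg) = -\sum_{j} \alpha_{t,j}$, the triangle inequality yields $|S(g)| \le (s-1)\sqrt{b^m}$, and undoing the reduction of the first step gives \eqref{Weil_bound}.
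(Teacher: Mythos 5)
The paper does not actually prove Proposition~\ref{WeilA}; it is quoted from Weil \cite{Weil} with a pointer to \cite[Theorems~5.36--5.38]{LN97}, so there is no internal argument to compare against. Your sketch is a correct outline of the standard geometric proof that underlies those references, and the individual steps check out: the reduction $\psi = \psi_1\circ\mathrm{Tr}(a\,\cdot)$ with $a$ absorbed into $f$ is legitimate; the Artin--Schreier identity $\#C_{\mathrm{aff}}(\mathbb{F}_{b^m}) = b^m + \sum_{t\in\mathbb{F}_b^\ast} S(tg)$ follows from additive Hilbert~90 exactly as you say; the hypothesis $\gcd(s,b^m)=1$ (equivalently $b\nmid s$, which is where \eqref{cond_sN} enters) gives a single totally ramified point over $z=\infty$ and hence $g_C=(b-1)(s-1)/2$; and you correctly identify that the Hasse--Weil aggregate loses a factor $b-1$ and must be repaired by the factorization of $Z(C,T)$ into Artin $L$-polynomials $L(tg,T)$, each of degree exactly $s-1$ (by the conductor computation, again using $b\nmid s$) with reciprocal roots of modulus $\sqrt{b^m}$, whence $|S(g)| = \bigl|\sum_j \alpha_{1,j}\bigr| \le (s-1)\sqrt{b^m}$. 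The one thing to be clear-eyed about is that the entire depth of the statement is concentrated in the step you invoke as a black box, namely the Riemann hypothesis for curves (equivalently, $|\alpha_{t,j}|=\sqrt{b^m}$); this is not a gap relative to the paper, since \cite[Theorem~5.37]{LN97} is precisely that statement in its $L$-function formulation, but your proof is exactly as deep as the cited one, not more elementary. The $L$-function route in \cite{LN97} reaches the same factorization directly from the generating function $\exp\bigl(\sum_n S_n T^n/n\bigr)$ over the tower $\mathbb{F}_{b^{mn}}$ without ever introducing the curve $C$; your version buys geometric transparency (the genus explains the degree $s-1$) at the cost of needing absolute irreducibility and the ramification analysis of $C$, which you assert but should verify if this were written out in full.
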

Since $|\psi(z)| = 1$ for all $z \in \mathbb{F}_{b^m}$ we have the trivial bound
\begin{equation*}
\left|\sum_{z \in \mathbb{F}_{b^m} } \psi(f(z)) \right| \le b^m.
\end{equation*}
Thus if $(s-1) \sqrt{b^m} \ge b^m$ the bound \eqref{Weil_bound} only yields a trivial result. For this reason we only consider the case where $s \le \sqrt{b^m}$ in this paper.

We consider now a special case of Proposition~\ref{WeilA}. Let $m=1$ and $N = b$, where $b$ is a prime number.  The finite field $\mathbb{F}_N$ is then just the finite field $\mathbb{Z}_N = \{0, 1, \ldots, N-1\}$ of residues modulo $N$, where we identify the elements in $\mathbb{Z}_N$ with the corresponding integers. The additive characters of $\mathbb{Z}_N$ are given by $\psi_a(x) = \mathrm{e}^{2\pi \mathrm{i} a x/N}$ for $a \in \mathbb{Z}_{N}$. For instance, $\psi_0$ is the trivial character. In the following we write $\mathbb{Z}$ for the set of integers, $\mathbb{N} = \{1, 2, \ldots\}$ for the set of natural numbers, and $\mathbb{N}_0$ for the set of nonnegative integers. For vectors $\boldsymbol{k} = (k_1, k_2, \ldots, k_s) \in \mathbb{Z}^s$ we write $N | \boldsymbol{k}$ if $N$ divides all components of $\boldsymbol{k}$, that is $N | k_j$ for $1 \le j \le s$. We write $N \not\,\mid \boldsymbol{k}$ if $N$ does not divide the vector $\boldsymbol{k}$.

\begin{proposition}\label{WeilB}
Let $N$ be a prime number and let $s \ge 1$ be a natural number such that $s \le \sqrt{N}$. Then for all $\boldsymbol{k} = (k_1, k_2, \ldots, k_s) \in \mathbb{Z}^s$ such that $N \not\,\mid \boldsymbol{k}$ we have
\begin{equation*}
\left|\sum_{n=0}^{N-1} \mathrm{e}^{2\pi \mathrm{i} (k_1 n + k_2 n^2 + \cdots + k_s n^s)/N} \right| \le (s-1) \sqrt{N}.
\end{equation*}
\end{proposition}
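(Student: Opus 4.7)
The plan is to reduce Proposition~\ref{WeilB} to the already stated Proposition~\ref{WeilA} by recognising the exponential sum as a Weil sum over the field $\mathbb{F}_N$, and then handling the technical point that the degree of the effective polynomial mod $N$ can be smaller than $s$.

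First I would introduce the canonical additive character $\psi(x) = \mathrm{e}^{2\pi \mathrm{i} x/N}$ of $\mathbb{Z}_N = \mathbb{F}_N$ and set
\[
f(z) = k_s z^s + k_{s-1} z^{s-1} + \cdots + k_1 z \in \mathbb{Z}[z].
\]
Since $\psi$ factors through reduction modulo $N$, the sum in the statement equals $\sum_{z \in \mathbb{F}_N} \psi(\bar{f}(z))$, where $\bar{f} \in \mathbb{F}_N[z]$ is obtained by reducing each coefficient $k_j$ modulo $N$. The hypothesis $N \not\,\mid \boldsymbol{k}$ ensures that at least one $k_j$ is nonzero mod $N$, so $\bar{f}$ is a nonzero polynomial; since $\bar{f}$ has no constant term, its degree $t := \deg \bar{f}$ satisfies $1 \le t \le s$.

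Next I would verify the hypotheses \eqref{cond_sN} of Proposition~\ref{WeilA} applied to $\bar{f}$ with the degree parameter $t$ in place of $s$. From $t \le s \le \sqrt{N}$ we have $t < N$, and since $N$ is prime and $1 \le t < N$, $\gcd(t, N) = 1$. Proposition~\ref{WeilA} therefore yields
\[
\left|\sum_{n=0}^{N-1} \mathrm{e}^{2\pi \mathrm{i} (k_1 n + \cdots + k_s n^s)/N}\right| = \left|\sum_{z \in \mathbb{F}_N} \psi(\bar{f}(z))\right| \le (t-1) \sqrt{N} \le (s-1)\sqrt{N},
\]
which is the desired bound.

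The only genuine subtlety is the possibility that the nominal leading coefficient $k_s$ vanishes modulo $N$, in which case Proposition~\ref{WeilA} cannot be applied with the advertised degree $s$; passing to the actual degree $t$ of $\bar{f}$ and using monotonicity of $(s-1)\sqrt{N}$ in $s$ resolves this. All other conditions are automatic from $N$ being prime and $s \le \sqrt{N}$, so no further work is required.
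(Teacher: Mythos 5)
Your proof is correct and takes essentially the same route as the paper, which simply presents Proposition~\ref{WeilB} as the specialization of Proposition~\ref{WeilA} to $m=1$, $b=N$ with the character $\psi(x)=\mathrm{e}^{2\pi \mathrm{i} x/N}$. Your explicit treatment of the case where $k_s \equiv 0 \pmod N$ (passing to the true degree $t$ of the reduced polynomial and using monotonicity of $(s-1)\sqrt{N}$) fills in a detail the paper leaves implicit.
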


To cover all possible choices of $\boldsymbol{k} = (k_1, k_2, \ldots, k_s)$, note that we obviously have for $N | \boldsymbol{k}$ 
\begin{equation*}
\sum_{n=0}^{N-1} \mathrm{e}^{2\pi \mathrm{i} (k_1 n + k_2 n^2 + \cdots + k_s n^s)/N} = N.
\end{equation*}

We now relate Proposition~\ref{WeilB} to numerical integration of Fourier series. Proposition~\ref{WeilB} suggests to use the following quadrature points. Let $N$ be a prime number. Then we define the quadrature points
\begin{equation}\label{constr_Fourier}
\boldsymbol{x}_n = \left(\left\{\frac{n}{N} \right\}, \left\{\frac{n^{2} }{N} \right\}, \ldots, \left\{\frac{n^{s} }{N} \right\} \right) \quad \mbox{for } 0 \le n < N,
\end{equation}
where $\{x\} = x-\lfloor x \rfloor$ denotes the fractional part for nonnegative real numbers $x$. Let
\begin{equation}\label{PNs}
P_{N,s} = \{\boldsymbol{x}_0, \boldsymbol{x}_1, \ldots, \boldsymbol{x}_{N-1}\}.
\end{equation}

Note that $P_{N,s}$ given by \eqref{PNs} is similar to the construction studied in \cite{EHN94}. In fact, one can obtain similar results for the point sets from \cite{EHN94} as for \eqref{PNs}.

\begin{remark}
A straightforward implementation constructs the point set $P_{N,s}$ in order $Ns$ operations. However, the points can be generated in order $N$ operations using order $N$ storage in the following way. Let $N$ be a prime number and $g$ be a primitive element modulo $N$. Generate and store the vector $(a_0, a_1, \ldots, a_{N-2} )$ where $a_n = g^{n} \pmod{N}$. Then let $\boldsymbol{x}'_0 = (0, 0, \ldots, 0)$ and for $0 \le n < N-1$ let
\begin{equation}\label{constr_Fourier_fast}
\boldsymbol{x}'_{n+1} = \left(\frac{a_{n \pmod{N-1} }}{N}, \frac{a_{2 n \pmod{N-1}}}{N}, \ldots, \frac{a_{s n \pmod{N-1}}}{N}\right).
\end{equation}
Then $P_{N,s} = \{\boldsymbol{x}'_0, \boldsymbol{x}'_1, \ldots, \boldsymbol{x}'_{N-1} \}$ (only the ordering of the points is different).
\end{remark}

We denote the standard inner product by $\boldsymbol{k} \cdot \boldsymbol{x} = k_1 x_1 + k_2 x_2 + \cdots + k_s x_s$.  Proposition~\ref{WeilB} implies that for any $\boldsymbol{k} = (k_1, k_2, \ldots, k_s) \in \mathbb{Z}^s$ with $N \not\,\mid \boldsymbol{k}$ we have
\begin{equation}\label{trig_sum1}
\left| \frac{1}{N} \sum_{n=0}^{N-1} \mathrm{e}^{2\pi \mathrm{i} \boldsymbol{k} \cdot \boldsymbol{x}_n } \right| \le \frac{s-1}{\sqrt{N}}.
\end{equation}
Again, for $N \mid \boldsymbol{k}$ we have
\begin{equation}\label{trig_sum2}
\frac{1}{N} \sum_{n=0}^{N-1} \mathrm{e}^{2\pi \mathrm{i} \boldsymbol{k} \cdot \boldsymbol{x}_n }  = 1.
\end{equation}

We apply \eqref{trig_sum1} and \eqref{trig_sum2} to numerical integration of absolutely convergent Fourier series 
\begin{equation}\label{Fourier_series_F}
F(\boldsymbol{x}) = \sum_{\boldsymbol{k} \in \mathbb{Z}^s} a_{\boldsymbol{k}} \mathrm{e}^{2\pi \mathrm{i} \boldsymbol{k} \cdot \boldsymbol{x} }, 
\end{equation}
i.e., where for $a_{\boldsymbol{k}} \in \mathbb{C}$ we assume that $\sum_{\boldsymbol{k} \in \mathbb{Z}} |a_{\boldsymbol{k}}| < \infty$. Then we have
\begin{equation}\label{eq_int_Fourier}
\int_{[0,1]^s} F(\boldsymbol{x}) \,\mathrm{d} \boldsymbol{x} - \frac{1}{N} \sum_{n=0}^{N-1} F(\boldsymbol{x}_n) = - \sum_{\boldsymbol{k} \in \mathbb{Z}^s \setminus\{ \boldsymbol{0} \} } a_{\boldsymbol{k}} \frac{1}{N} \sum_{n=0}^{N-1} \mathrm{e}^{2\pi \mathrm{i} \boldsymbol{k} \cdot \boldsymbol{x}_n}.
\end{equation} 
In Section~\ref{sec_Fourier} we show that to use Equation~\eqref{trig_sum1}, we require that the Fourier series is absolutely convergent, and to obtain a convergence rate using Equation~\eqref{trig_sum2} it is sufficient to assume that the Fourier series satisfies a H\"older condition.

In the following we review the literature in which the point set $P_{N,s}$ given by \eqref{PNs} has been studied.

\section{Literature review}

The point set $P_{N,s}$ given by \eqref{PNs} and similar constructions have been studied before by Korobov~\cite{Korobov} in 1963 and Hua and Wang~\cite{HW} in 1978. However, at those times it was not customary to prove bounds which depend only polynomially on the dimension and so these classical results have an exponential dependence on the dimension. Since the dependence of the upper bound on the number of points $N$ is also weaker than for other point sets, the point set $P_{N,s}$ has not received much attention since those early results. We review the results by Korobov and Hua and Wang in the following. A summary of these results can also be found in \cite[Section~3.15.5]{SP05}.

Numerical integration of Fourier series was studied by Korobov. We describe \cite[Theorem 6, p. 72]{Korobov}. Let $F$ be the Fourier series \eqref{Fourier_series_F}. Assume that the Fourier coefficients satisfy $|a_{\boldsymbol{k}}| \le c \prod_{j=1}^s \min \{1, |k_j|^{-\alpha}\}$ for some $\alpha > 1$.  In \cite[Theorem 6, p. 72]{Korobov} it is shown that the integration error of Fourier series $F$ having Fourier coefficients $a_{\boldsymbol{k}}$ is bounded above by
\begin{equation*}
\left|\int_{[0,1]^s} F(\boldsymbol{x}) \,\mathrm{d} \boldsymbol{x} - \frac{1}{N} \sum_{\boldsymbol{x} \in P_{N,s}} F(\boldsymbol{x}) \right| \le \frac{(s-1)}{\sqrt{N}} \sum_{\boldsymbol{k} \in \mathbb{Z}^s} |a_{\boldsymbol{k}}| + \frac{c}{N^\alpha} \left(1 + 2 \sum_{k=1}^\infty k^{-\alpha} \right)^s.
\end{equation*}

Hua and Wang~\cite[p. 79]{HW} called the point set $P_{N,s}$ a `$p$-set'. From \cite[Theorem~4.9]{HW} we know that the star-discrepancy of this set is bounded above by $$D^*(P_{N,s}) \le c(s) N^{-1/2} (\ln N)^s$$ for all $N$ large enough. The constant $c(s)$ is given in \cite[p. 82]{HW}, which decreases exponentially fast with the dimension; however the upper bound still increases exponentially fast with the dimension for $N > 4$ due to the factor $(\ln N)^s$.

Hua and Wang~\cite[Theorem~7.3, p. 134]{HW} also prove an upper bound on the integration error for a different space of periodic functions with smoothness parameter $0 < \alpha \le 1$. This space is described in \cite[Chapter~6]{HW}. For $1/2 < \alpha \le 1$ the upper bound on the integration error is of the form $c(\alpha,s) N^{-\alpha} (\ln N)^{s-1} + c'(\alpha,s) N^{-1/2}$  and for $0 < \alpha \le 1/2$ it is of the form $c''(\alpha, s) N^{-\alpha} (\ln N)^{s-1+\delta(\alpha,1/2)}$, where $\delta(\alpha,1/2) = 1$ if $\alpha = 1/2$ and $0$ otherwise. 

In all these results the upper bounds grow exponentially fast as the dimension $s$ increases. In this paper we prove upper bounds which only show a polynomial growth with the dimension $s$. We also extend our results to non-periodic function classes. In the next setting we introduce the initial and worst-case error and polynomial tractability. The idea of the latter concept is to study the dependence on the dimension, see for instance \cite[Chapter~1]{NW08} for more background on these ideas.

\section{Worst-case error, initial error and polynomial tractability}


In the following we study the worst-case integration error for certain Banach spaces $B$ with norm $\|\cdot \|_B$ using quasi-Monte Carlo (QMC) rules based on point sets $U \subset [0,1]^s$ consisting of $N$ points. We define the worst-case error of integration in $B$ using a QMC rule based on the point set $U$ by
\begin{equation*}
\mathrm{e}(B, U) = \sup_{\satop{f \in B}{\|f\|_{B} \le 1}} \left|\int_{[0,1]^s} f(\boldsymbol{x}) \,\mathrm{d} \boldsymbol{x} - \frac{1}{|U|} \sum_{\boldsymbol{x} \in U} f(\boldsymbol{x}) \right|.
\end{equation*}
For reference, we also define the initial error as the integration error when approximating the integral by $0$, that is,
\begin{equation*}
\mathrm{e}(B, \emptyset) = \sup_{\satop{f \in B}{\|f\|_{B} \le 1}} \left|\int_{[0,1]^s} f(\boldsymbol{x}) \,\mathrm{d} \boldsymbol{x}  \right|.
\end{equation*}
For the spaces considered in this paper we always have $\mathrm{e}(B, \emptyset) = 1$.

For numerical integration in $B$, the QMC information complexity $N(\varepsilon, s)$ is the smallest number $N$ such that there exists a point set $U \subset [0,1]^s$ with $|U| = N$ and $e(B, U) \le \varepsilon e(B, \emptyset)$, that is
\begin{equation*}
N(\varepsilon, s) = \inf \{N \in \mathbb{N}: \exists U \subset [0,1]^s: |U| = N \mbox{ and } e(B, U) \le \varepsilon e(B, \emptyset)\}.
\end{equation*}
The general concept of information complexity includes the use arbitrary quadrature rules, not just QMC rules, see \cite[p. 1]{NW08}. The QMC information complexity is thus an upper bound on the information complexity.

The QMC information complexity shows how difficult it is to reduce the initial error by a factor of $\varepsilon$ using a QMC rule. Classically one is interested in studying the dependence of $N(\varepsilon, s)$ on $\varepsilon$. However, if $s$ is large, then also the dependence of $N(\varepsilon, s)$ on $s$ is often significant. To classify problems according to their difficulty, we introduce the concept of QMC polynomial tractability.

The approximation of integrals of functions in $B$ is \emph{polynomially QMC tractable}, if there exist real numbers $C, q > 0$ and $r \ge 0$ such that
\begin{equation}\label{N_eps_s_bound}
N(\varepsilon, s) \le C \varepsilon^{-q} s^r \quad \mbox{for all } \varepsilon \in (0,1) \mbox{ and } s \in \mathbb{N}.
\end{equation}
If $r=0$, then the integration problem is \emph{strongly polynomially QMC tractable}.

Replacing the QMC information complexity in \eqref{N_eps_s_bound} by the information complexity yields the concepts of \emph{polynomial tractability} and \emph{strong polynomial tractability}. Since the QMC information complexity is an upper bound on the information complexity, it follows that (strong) polynomial QMC tractability implies (strong) polynomial tractability. A thorough discussion of the general concepts can be found for instance in \cite[Chapter~1 and 2]{NW08}.

In the next section we discuss numerical integration of Fourier series using the point set $P_{N,s}$ given by \eqref{PNs}.

\section{Absolutely convergent Fourier series}\label{sec_Fourier}

\subsection{Function space}

We introduce a space of functions which can be represented by Fourier series (which we explain below). Let $f \in L^2([0,1]^s)$ and assume that $f$ is one-periodic in each variable, that is, $f(\boldsymbol{x}_u, \boldsymbol{0}) = f(\boldsymbol{x}_u, \boldsymbol{1})$ for all $u \subseteq \{1,2,\ldots, s\}$ and all $\boldsymbol{x}_u \in [0,1]^u$, where $\boldsymbol{x}_u \in [0,1]^u$ denotes a vector whose components are indexed by the elements $j \in u$, i.e., $\boldsymbol{x}_u = (x_j)_{j \in u}$ and $(\boldsymbol{x}_u, \boldsymbol{0})$ is the $s$-dimensional vector whose $j$th coordinate is $x_j$ for $j \in u$ and $0$ otherwise. Similarly, $\boldsymbol{x}_u, \boldsymbol{1})$ is the $s$-dimensional vector whose $j$th coordinate is $x_j$ for $j \in u$ and $1$ otherwise. Define the Fourier coefficient of $f$ by
\begin{equation*}
\widetilde{f}(\boldsymbol{k}) = \int_{[0,1]^s} f(\boldsymbol{x}) \mathrm{e}^{-2\pi \mathrm{i} \boldsymbol{k} \cdot \boldsymbol{x}} \,\mathrm{d} \boldsymbol{x}.
\end{equation*}
Then we have
\begin{equation*}
f(\boldsymbol{x}) \sim \sum_{\boldsymbol{k} \in \mathbb{Z}^s} \widetilde{f}(\boldsymbol{k}) \mathrm{e}^{2\pi \mathrm{i} \boldsymbol{k} \cdot \boldsymbol{x}} =: F(\boldsymbol{x}).
\end{equation*}

For $0 < \alpha \le 1$ and $1 \le p \le \infty$ we define the H\"older semi-norm
\begin{equation*}
|f|_{H_{\alpha,p}} = \sup_{\boldsymbol{x}, \boldsymbol{h}, \boldsymbol{x}+\boldsymbol{h} \in [0,1)^s} \frac{|f(\boldsymbol{x} + \boldsymbol{h})  - f(\boldsymbol{x}) |}{\|\boldsymbol{h} \|_{\ell^p}^\alpha},
\end{equation*}
where $\|\cdot \|_{\ell_p}$ denotes the $\ell^p$ norm and we define the norm
\begin{equation}\label{Fourier_norm}
\|f\|_{K_{\alpha,p}} = \sum_{\boldsymbol{k} \in \mathbb{Z}^s} \left|\widetilde{f}(\boldsymbol{k})\right| + |f|_{H_{\alpha,p}}.
\end{equation}
The norms $\|\cdot\|_{\ell^p}$ are of course all equivalent, however, the choice of $p$ will influence the dependence on the dimension of the worst-case error upper bound.

Note that we have
\begin{equation*}
\left( \int_{[0,1]^s} |f(\boldsymbol{x})|^2 \,\mathrm{d} \boldsymbol{x} \right)^{1/2} = \left( \sum_{\boldsymbol{k} \in \mathbb{Z}^s} |\widetilde{f}(\boldsymbol{k})|^2 \right)^{1/2} \le \sum_{\boldsymbol{k} \in \mathbb{Z}^s} |\widetilde{f}(\boldsymbol{k})| \le \|f\|_{K_{\alpha, p}}.
\end{equation*}
For $0 < \alpha \le 1$ and $1 \le p \le \infty$ we define the space $K_{\alpha,p}$ of functions $f: [0,1)^s \to \mathbb{R}$ as the set of all one-periodic functions $f \in L^2([0,1]^s)$ with finite norm, that is,
\begin{equation*}
K_{\alpha,p} = \{f \in L^2([0,1]^s): f \mbox{ is one-periodic and } \|f\|_{K_{\alpha,p}} < \infty\}.
\end{equation*}

Our main interest lies in the range $0 < \alpha \le 1/2$. If $0 < \alpha \le 1/2$, then it is known that there are periodic functions $f$ with $|f|_{H_{\alpha,p}} < \infty$ and $\sum_{\boldsymbol{k} \in \mathbb{Z}^s} \left| \widetilde{f}(\boldsymbol{k}) \right| = \infty$, see \cite[p. 35]{Ka04}. In the opposite direction, for instance, the Weierstra{\ss} function $w_\beta(x) = \sum_{k=0}^\infty 2^{-\beta k} \cos(2\pi 2^k x)$ provides an example of a function whose Fourier coefficients are absolutely summable for any $\beta > 0$, but $|w_\beta|_{H_{\alpha, p}} = \infty$ for every $\beta < \alpha \le 1$. Thus, both parts of the norm \eqref{Fourier_norm} provide independent conditions on $f$. 

We note that for any $f \in K_{\alpha,p}$, the Fourier series $F$ of $f$ converges to $f$ at every point in $[0,1]^s$. This follows directly from \cite[Corollary~1.8, p. 249]{StWe71}, using that $f$ is continuous since it satisfies a H\"older condition, i.e. $|f|_{H_{\alpha, p}} < \infty$.

\subsection{Numerical integration}

In this subsection we use \eqref{trig_sum1} and \eqref{trig_sum2} to obtain a bound on the integration error \eqref{eq_int_Fourier}. We split the right-hand side of  \eqref{eq_int_Fourier} into two parts. To obtain an estimation we need the following lemma which is most likely known (we include a proof for completeness).
\begin{lemma}\label{lem_Fourier_Holder}
For any $0 < \alpha \le 1$, $1 \le p \le \infty$, $f \in K_{\alpha, p}$ and $L \in \mathbb{N}$ we have
\begin{equation*}
\left| \sum_{\boldsymbol{k} \in \mathbb{Z}^s \setminus \{\boldsymbol{0}\} } \widetilde{f}(L \boldsymbol{k}) \right| \le \frac{s^{\alpha / p}}{L^\alpha} |f|_{H_{\alpha, p}}.
\end{equation*}
\end{lemma}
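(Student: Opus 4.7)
The plan is to reduce the sum over Fourier coefficients to a classical aliasing (geometric-grid) identity and then estimate the resulting quadrature error on a uniform lattice by the Hölder condition.

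The first step is to observe that for the rank-$1$ grid $\boldsymbol{n}/L$, $\boldsymbol{n} \in \{0,1,\ldots,L-1\}^s$, the standard character orthogonality
\begin{equation*}
\sum_{\boldsymbol{n} \in \{0,\ldots,L-1\}^s} \mathrm{e}^{2\pi \mathrm{i} \boldsymbol{m} \cdot \boldsymbol{n}/L} = \begin{cases} L^s & \mbox{if } L \mid \boldsymbol{m},\\ 0 & \mbox{otherwise,}\end{cases}
\end{equation*}
together with the pointwise convergence of the Fourier series of $f \in K_{\alpha,p}$ (already stated in the excerpt), yields the aliasing identity
\begin{equation*}
\frac{1}{L^s} \sum_{\boldsymbol{n} \in \{0,\ldots,L-1\}^s} f(\boldsymbol{n}/L) = \sum_{\boldsymbol{k} \in \mathbb{Z}^s} \widetilde{f}(L\boldsymbol{k}).
\end{equation*}
Subtracting the $\boldsymbol{k} = \boldsymbol{0}$ term and using $\widetilde{f}(\boldsymbol{0}) = \int_{[0,1]^s} f(\boldsymbol{x})\,\mathrm{d}\boldsymbol{x}$, the sum to be bounded becomes exactly the quadrature error of the grid rule $L^{-s}\sum_{\boldsymbol{n}} f(\boldsymbol{n}/L)$ for $\int_{[0,1]^s} f$.

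The second step is to bound this quadrature error by partitioning $[0,1]^s$ into the $L^s$ axis-aligned cubes $Q_{\boldsymbol{n}} = \prod_{j=1}^s [n_j/L, (n_j+1)/L]$, each of volume $L^{-s}$. Writing
\begin{equation*}
\frac{1}{L^s} \sum_{\boldsymbol{n}} f(\boldsymbol{n}/L) - \int_{[0,1]^s} f(\boldsymbol{x})\,\mathrm{d}\boldsymbol{x} = \sum_{\boldsymbol{n}} \int_{Q_{\boldsymbol{n}}} \bigl[ f(\boldsymbol{n}/L) - f(\boldsymbol{x}) \bigr] \,\mathrm{d} \boldsymbol{x},
\end{equation*}
each integrand is controlled by the Hölder condition. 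For $\boldsymbol{x} \in Q_{\boldsymbol{n}}$ the displacement $\boldsymbol{h} = \boldsymbol{x} - \boldsymbol{n}/L$ satisfies $\|\boldsymbol{h}\|_{\ell^p} \le \|(1/L,\ldots,1/L)\|_{\ell^p} = s^{1/p}/L$, so $|f(\boldsymbol{x}) - f(\boldsymbol{n}/L)| \le |f|_{H_{\alpha,p}} s^{\alpha/p} L^{-\alpha}$. Summing this uniform bound over the $L^s$ cubes of measure $L^{-s}$ gives the desired factor $s^{\alpha/p} L^{-\alpha} |f|_{H_{\alpha,p}}$.

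The main obstacles are minor. One is justifying the interchange of summations in the aliasing identity: this is immediate since $f \in K_{\alpha,p}$ has absolutely summable Fourier coefficients, so Fubini applies to the double sum. The other is the boundary issue in the Hölder seminorm, which is defined on $[0,1)^s$, whereas the cube $Q_{\boldsymbol{n}}$ with $\boldsymbol{n} = (L-1,\ldots,L-1)$ touches the face $x_j = 1$; this is handled by periodicity and continuity of $f$, which let us extend the inequality to all of $[0,1]^s$. Apart from these, the argument is essentially an aliasing identity plus a midpoint-rule style estimate.
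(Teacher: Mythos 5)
Your proof is correct and follows essentially the same route as the paper's: the same aliasing identity over the grid $\boldsymbol{n}/L$ followed by a cube-by-cube H\"older estimate with displacement bound $s^{1/p}/L$. The only (immaterial) difference is that you bound $|f(\boldsymbol{n}/L)-f(\boldsymbol{x})|$ pointwise inside each cube, whereas the paper first invokes the mean value theorem to replace the local average by a point value $f(\boldsymbol{y}_{\boldsymbol{k}})$.
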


\begin{proof}
Using the Fourier series expansion of $f$, we obtain for any $L \in \mathbb{N}$ that
\begin{align*}
 \frac{1}{L^s} \sum_{\boldsymbol{\ell} \in \{0,1, \ldots, L-1\}^s} f\left(\frac{\boldsymbol{\ell}}{L} \right) = & \sum_{\boldsymbol{k} \in \mathbb{Z}^s} \widetilde{f}(\boldsymbol{k}) \frac{1}{L^s} \sum_{\boldsymbol{\ell} \in \{0, 1, \ldots, L-1\}^s} \mathrm{e}^{2\pi \mathrm{i} \boldsymbol{k} \cdot \boldsymbol{\ell} / L} = \sum_{\boldsymbol{k} \in \mathbb{Z}^s} \widetilde{f}(L \boldsymbol{k}),
\end{align*}
where the last inequality follows since
\begin{equation*}
\frac{1}{L^s} \sum_{\boldsymbol{\ell} \in \{0, 1, \ldots, L-1\}^s} \mathrm{e}^{2\pi \mathrm{i} \boldsymbol{k} \cdot \boldsymbol{\ell} / L} = \prod_{j=1}^s \frac{1}{L} \sum_{\ell_j=0}^{L-1} \mathrm{e}^{2\pi \mathrm{i} k_j \ell_j/L} = \left\{\begin{array}{rl}  1 & \mbox{if } L | \boldsymbol{k}, \\ 0 & \mbox{otherwise}.  \end{array} \right.
\end{equation*}
Thus we obtain
\begin{align*}
\left| \sum_{\boldsymbol{k} \in \mathbb{Z}^s \setminus \{\boldsymbol{0}\} } \widetilde{f}(L \boldsymbol{k}) \right| = & \left|\frac{1}{L^s} \sum_{\boldsymbol{k} \in \{0,1, \ldots, L-1\}^s} f\left(\frac{\boldsymbol{k}}{L} \right) - \int_{[0,1]^s} f(\boldsymbol{x}) \,\mathrm{d} \boldsymbol{x} \right| \\ = & \left|\frac{1}{L^s} \sum_{\boldsymbol{k} \in \{0,1,\ldots, L-1\}^s} \left(f\left(\frac{\boldsymbol{k}}{L} \right) - L^s \int_{[\boldsymbol{k} L^{-1}, (\boldsymbol{k} + \boldsymbol{1}) L^{-1})} f(\boldsymbol{x}) \,\mathrm{d} \boldsymbol{x} \right) \right|,
\end{align*}
where $[\boldsymbol{k} L^{-1}, (\boldsymbol{k} + \boldsymbol{1}) L^{-1}) = \prod_{j=1}^s [k_j L^{-1}, (k_j+1) L^{-1})$. Since $f$ is continuous, the mean value theorem implies that for each $\boldsymbol{k} \in \{0,1,\ldots, L-1\}^s$ there is a $\boldsymbol{y}_{\boldsymbol{k}} \in [\boldsymbol{k} L^{-1}, (\boldsymbol{k} + \boldsymbol{1}) L^{-1} )$ such that $L^s \int_{[\boldsymbol{k} L^{-1}, (\boldsymbol{k} + \boldsymbol{1}) L^{-1})} f(\boldsymbol{x}) \,\mathrm{d} \boldsymbol{x}  =  f(\boldsymbol{y}_{\boldsymbol{k}})$. Since  $\|\boldsymbol{y}_{\boldsymbol{k}} - \boldsymbol{k}/L \|_{\ell^p}$ is bounded by $s^{1/p} /L$, we obtain from the assumption that $f$ satisfies a H\"older condition $|f|_{H_{\alpha, p}} < \infty$ that
\begin{align*}
\left| \sum_{\boldsymbol{k} \in \mathbb{Z}^s \setminus \{\boldsymbol{0}\} } \widetilde{f}(L \boldsymbol{k}) \right| = & \left|\frac{1}{L^s} \sum_{\boldsymbol{k} \in \{0,1,\ldots, L-1\}^s} \left(f\left(\frac{\boldsymbol{k}}{L} \right) - f(\boldsymbol{y}_{\boldsymbol{k}}) \right) \right| \\ \le & \frac{1}{L^s} \sum_{\boldsymbol{k} \in \{0,1,\ldots, L-1\}^s} \left| f\left(\frac{\boldsymbol{k}}{L} \right) - f(\boldsymbol{y}_{\boldsymbol{k}}) \right| \\ \le & \frac{s^{ \alpha / p}}{L^\alpha} |f|_{H_{\alpha, p}},
\end{align*}
which implies the result.
\end{proof}

We now obtain a bound on the integration error for the function space $K_{\alpha, p}$.

\begin{theorem}\label{thm_per}
Let $0 < \alpha \le 1$ and $1 \le p \le \infty$. Then for any prime number $N$ and natural number $1 \le s < N$ we have
\begin{equation*}
\mathrm{e}(K_{\alpha, p}, P_{N,s}) \le \max\left(\frac{s-1}{\sqrt{N}},  \frac{s^{ \alpha / p }}{N^{\alpha}} \right).
\end{equation*}
\end{theorem}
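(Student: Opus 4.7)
The plan is to apply the integration error identity \eqref{eq_int_Fourier}, split the sum over $\boldsymbol{k} \ne \boldsymbol{0}$ according to whether $N$ divides $\boldsymbol{k}$, and estimate each piece using \eqref{trig_sum1}, \eqref{trig_sum2}, and Lemma~\ref{lem_Fourier_Holder}.

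First, for $f \in K_{\alpha,p}$, the Fourier series $F$ converges pointwise to $f$ (as noted after the definition of $K_{\alpha,p}$), so in particular $f(\boldsymbol{x}_n) = F(\boldsymbol{x}_n)$ for every quadrature node and $\int_{[0,1]^s} f = \widetilde{f}(\boldsymbol{0})$. Because $\sum_{\boldsymbol{k}} |\widetilde{f}(\boldsymbol{k})| < \infty$, Fubini/dominated convergence justifies interchanging summation and the finite average, which gives \eqref{eq_int_Fourier} with $a_{\boldsymbol{k}} = \widetilde{f}(\boldsymbol{k})$.

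Next I would split
\begin{equation*}
\mathbb{Z}^s \setminus \{\boldsymbol{0}\} = \{\boldsymbol{k} : N \nmid \boldsymbol{k}\} \;\cup\; \{N\boldsymbol{k} : \boldsymbol{k} \in \mathbb{Z}^s \setminus \{\boldsymbol{0}\}\}
\end{equation*}
and treat the two parts separately. Assume first $s \le \sqrt{N}$, so Proposition~\ref{WeilB} (equivalently \eqref{trig_sum1}) applies. For $N \nmid \boldsymbol{k}$ we get $|\frac{1}{N}\sum_n \mathrm{e}^{2\pi\mathrm{i}\boldsymbol{k}\cdot\boldsymbol{x}_n}| \le (s-1)/\sqrt{N}$, hence the first piece of \eqref{eq_int_Fourier} is bounded by $\tfrac{s-1}{\sqrt{N}} \sum_{\boldsymbol{k}} |\widetilde{f}(\boldsymbol{k})|$. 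For $\boldsymbol{k}$ divisible by $N$, \eqref{trig_sum2} makes the character sum equal $1$, so the second piece equals $\sum_{\boldsymbol{k}\ne\boldsymbol{0}} \widetilde{f}(N\boldsymbol{k})$, which by Lemma~\ref{lem_Fourier_Holder} (with $L=N$) is bounded in modulus by $\tfrac{s^{\alpha/p}}{N^\alpha}|f|_{H_{\alpha,p}}$. Combining,
\begin{equation*}
\left|\int_{[0,1]^s} f - \frac{1}{N}\sum_{n=0}^{N-1} f(\boldsymbol{x}_n)\right| \le \frac{s-1}{\sqrt{N}} \sum_{\boldsymbol{k}\in\mathbb{Z}^s}|\widetilde{f}(\boldsymbol{k})| + \frac{s^{\alpha/p}}{N^\alpha} |f|_{H_{\alpha,p}}.
\end{equation*}

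Setting $M = \max\bigl(\tfrac{s-1}{\sqrt{N}},\tfrac{s^{\alpha/p}}{N^\alpha}\bigr)$, the right-hand side is at most $M\bigl(\sum_{\boldsymbol{k}}|\widetilde{f}(\boldsymbol{k})| + |f|_{H_{\alpha,p}}\bigr) = M\|f\|_{K_{\alpha,p}}$, and taking the supremum over the unit ball yields the theorem. To clean up the case $s > \sqrt{N}$ (where Weil's bound is vacuous) I would just note that then $M \ge (s-1)/\sqrt{N} \ge 1$, so the trivial estimate $|\int f - \frac{1}{N}\sum f(\boldsymbol{x}_n)| \le \sum_{\boldsymbol{k}\ne\boldsymbol{0}}|\widetilde{f}(\boldsymbol{k})| \le \|f\|_{K_{\alpha,p}} \le M\|f\|_{K_{\alpha,p}}$ closes the gap. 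No step feels truly delicate; the only thing to watch is the ``additive-into-max'' repackaging, which works cleanly because both $\sum|\widetilde{f}(\boldsymbol{k})|$ and $|f|_{H_{\alpha,p}}$ appear with positive sign in the norm \eqref{Fourier_norm}.
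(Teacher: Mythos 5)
Your argument is the same as the paper's: expand $f$ into its pointwise convergent, absolutely summable Fourier series, split $\boldsymbol{k}\ne\boldsymbol{0}$ according to whether $N\mid\boldsymbol{k}$, bound the non-divisible part via \eqref{trig_sum1} and the divisible part via Lemma~\ref{lem_Fourier_Holder} with $L=N$, and absorb the two resulting terms into the maximum against the two summands of $\|f\|_{K_{\alpha,p}}$. The only blemish is your final clean-up for $s>\sqrt{N}$: the inequality $(s-1)/\sqrt{N}\ge 1$ can fail there (take $N=5$, $s=3$), so that patch does not close the case as written; fortunately it is not needed, because for prime $N$ and any $1\le s<N$ the hypotheses \eqref{cond_sN} of Proposition~\ref{WeilA} hold automatically (the actual degree $d$ of $k_1z+\cdots+k_sz^s$ reduced modulo $N$ satisfies $1\le d\le s<N$, hence $\gcd(d,N)=1$), so the bound \eqref{trig_sum1} is valid --- possibly trivially --- on the whole range $1\le s<N$, which is exactly how the paper uses it.
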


\begin{proof}
Using the fact that $\sum_{\boldsymbol{k} \in \mathbb{Z}^s} \left|\widetilde{f}(\boldsymbol{k})\right| < \infty$, we have
\begin{align*}
& \left| \frac{1}{N} \sum_{n=0}^{N-1} f(\boldsymbol{x}_n) - \int_{[0,1]^s} f(\boldsymbol{x}) \,\mathrm{d} \boldsymbol{x} \right| \\ = & \left| \sum_{\boldsymbol{k} \in \mathbb{Z}^s \setminus \{\boldsymbol{0}\}} \widetilde{f}(\boldsymbol{k}) \frac{1}{N} \sum_{n=0}^{N-1} \mathrm{e}^{2\pi \mathrm{i} \boldsymbol{k} \cdot \boldsymbol{x}_n}  \right| \\ \le &  \left| \sum_{\satop{\boldsymbol{k} \in \mathbb{Z}^s \setminus \{\boldsymbol{0}\}}{N \not\,\,\mid \boldsymbol{k}}} \widetilde{f}(\boldsymbol{k})  \frac{1}{N} \sum_{n=0}^{N-1} \mathrm{e}^{2\pi \mathrm{i} \boldsymbol{k} \cdot \boldsymbol{x}_n}  \right| +  \left| \sum_{\satop{\boldsymbol{k} \in \mathbb{Z}^s \setminus \{\boldsymbol{0}\}}{N \mid \boldsymbol{k}}} \widetilde{f}(\boldsymbol{k})  \frac{1}{N} \sum_{n=0}^{N-1} \mathrm{e}^{2\pi \mathrm{i} \boldsymbol{k} \cdot \boldsymbol{x}_n}  \right|\\ \le &  \sum_{\satop{\boldsymbol{k} \in \mathbb{Z}^s \setminus \{\boldsymbol{0}\}}{N \not\,\,\mid \boldsymbol{k}}} \left|\widetilde{f}(\boldsymbol{k}) \right|\left|\frac{1}
{N} \sum_{n=0}^{N-1} \mathrm{e}^{2\pi \mathrm{i} \boldsymbol{k} \cdot \boldsymbol{x}_n}  \right| +  \left| \sum_{\boldsymbol{k} \in \mathbb{Z}^s \setminus \{\boldsymbol{0}\}} \widetilde{f}(N \boldsymbol{k}) \right| \\ \le & \frac{s-1}{\sqrt{N} } \sum_{\boldsymbol{k} \in \mathbb{Z}^s} \left|\widetilde{f}(\boldsymbol{k}) \right| + \frac{s^{ \alpha / p}}{N^\alpha} |f|_{H_{\alpha,p}} \\ \le  &  \max\left(\frac{s-1}{\sqrt{N}},  \frac{s^{ \alpha / p}}{N^{\alpha}} \right) \|f\|_{K_{\alpha,p}}.
\end{align*}
Thus the result follows.
\end{proof}

An immediate consequence of Theorem~\ref{thm_per} is the following result.
\begin{corollary}
Numerical integration in $K_{\alpha, p}$ is polynomially tractable.
\end{corollary}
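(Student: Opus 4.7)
The plan is routine bookkeeping on top of Theorem~\ref{thm_per}. Since the initial error $\mathrm{e}(K_{\alpha,p},\emptyset)=1$, the task reduces to showing that for every $\varepsilon\in(0,1)$ and every $s\in\mathbb{N}$ there is an $N$-point QMC rule with worst-case error at most $\varepsilon$, where $N$ grows only polynomially in $\varepsilon^{-1}$ and $s$, and then feeding this into the definition of polynomial QMC tractability in \eqref{N_eps_s_bound}.

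First I would invert the bound of Theorem~\ref{thm_per}: a prime $N$ with $s<N$ gives worst-case error at most $\varepsilon$ whenever both $\frac{s-1}{\sqrt{N}}\le\varepsilon$ and $\frac{s^{\alpha/p}}{N^{\alpha}}\le\varepsilon$, i.e.\ $N\ge(s-1)^{2}\varepsilon^{-2}$ and $N\ge s^{1/p}\varepsilon^{-1/\alpha}$. Setting
\[
M(\varepsilon,s) := \max\!\left(\frac{(s-1)^{2}}{\varepsilon^{2}},\; \frac{s^{1/p}}{\varepsilon^{1/\alpha}},\; s+1\right),
\]
any prime $N\ge M(\varepsilon,s)$ satisfies the hypotheses of Theorem~\ref{thm_per} and meets the error target. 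I would also note that the points of $P_{N,s}$ are pairwise distinct (they are distinguished already by their first coordinate $n/N$ for $0\le n<N$), so $|P_{N,s}|=N$ and $N$ is admissible in the definition of $N(\varepsilon,s)$.

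Next I would invoke Bertrand's postulate to produce a prime $N$ with $M(\varepsilon,s)\le N\le 2M(\varepsilon,s)$, giving $N(\varepsilon,s)\le 2M(\varepsilon,s)$. Finally I would collapse the three terms of $M(\varepsilon,s)$ into one bound of the form $C\varepsilon^{-q}s^{r}$: since $p\ge 1$ gives $s^{1/p}\le s$, and $\varepsilon<1$ gives $\varepsilon^{-2},\varepsilon^{-1/\alpha}\le\varepsilon^{-\max(2,1/\alpha)}$, the choices $q=\max(2,1/\alpha)$ and $r=2$ with an absolute constant $C=C_{\alpha,p}$ suffice, which is precisely the required polynomial tractability bound \eqref{N_eps_s_bound}.

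There is no genuine obstacle here; the substantive work is already in Theorem~\ref{thm_per}. The only minor items to watch are the dimension restriction $s<N$ from Proposition~\ref{WeilB} (which is what forces the third entry $s+1$ in $M(\varepsilon,s)$), the use of Bertrand's postulate to pass from a real threshold to an actual prime without losing more than a factor of two, and the fact that the constants $C_{\alpha,p}$ and $q$ are allowed to depend on the fixed space parameters $\alpha$ and $p$ under the definition of polynomial QMC tractability.
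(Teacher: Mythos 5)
Your proposal is correct and follows essentially the route the paper intends: the paper offers no written proof, simply calling the corollary an immediate consequence of Theorem~\ref{thm_per}, and your argument (inverting the error bound, handling the constraint $s<N$, and using Bertrand's postulate to obtain a prime $N \le 2M(\varepsilon,s)$, yielding $N(\varepsilon,s)\le C_{\alpha}\varepsilon^{-\max(2,1/\alpha)}s^{2}$) is exactly the routine bookkeeping being elided.
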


\begin{remark}[Improved one dimensional projections]\label{rem_one_dim}
In the construction \eqref{constr_Fourier}, or equivalently \eqref{constr_Fourier_fast}, it can happen that some of the projections have less than $N$ different values, since $n^{N-1} \equiv 1 \pmod{N}$ for all $n \in \mathbb{Z}_N$, see for instance  \cite[p. 48, Lemma~2.3]{LN97}. As is well known, if $j \mid (N-1)$, then $n^{j} \pmod{N}$ can take on only $(N-1)/j$ different values. To see this, let $g$ be a primitive element, then $g^{j v} \pmod{N}$ only goes through the values $g^{j v}\pmod{N}$ for $0 \le v < (N-1)/j$. For the range $(N-1)/j \le v < N-1$ the values $g^{j v} \pmod{N}$ just get repeated.

To avoid this situation, we first generalize the definition of the quadrature points
\begin{equation*}
\boldsymbol{x}_{n, \boldsymbol{j}} = \left(\left\{\frac{n^{j_1}}{N} \right\}, \left\{\frac{n^{j_2} }{N} \right\}, \ldots, \left\{\frac{n^{j_s} }{N} \right\} \right) \quad \mbox{for } 0 \le n < N,
\end{equation*}
where $\boldsymbol{j} = (j_1, j_2,\ldots, j_s)$ with $1 \le j_1 < j_2 < \cdots < j_s < N$. Let
\begin{equation*}
P_{N,s, \boldsymbol{j} } = \{\boldsymbol{x}_{0, \boldsymbol{j}}, \boldsymbol{x}_{1, \boldsymbol{j}}, \ldots, \boldsymbol{x}_{N-1, \boldsymbol{j} }\}.
\end{equation*}
In the following we explain how to choose $\boldsymbol{j}$ to improve the one-dimensional projections. Let
\begin{equation*}
J = \{a \in \{1,2, \ldots, N-2\} : \mathrm{gcd}(a, N-1) = 1\} = \{a_1, a_2, a_3, \ldots, a_{|J|}\},
\end{equation*}
where $a_1 < a_2 < \cdots < a_{|J|}$. We choose now $j_i = a_i$ for $1 \le i \le s$. Let $\phi$ be Euler's totient function, then from \cite[Theorem~8.8.7]{BS96} we have for all $N > 3$ that
\begin{align*}
|J| = \phi(N-1) > \frac{N-1}{\mathrm{e}^\gamma \log \log (N-1) + \frac{3}{\log \log (N-1)} },
\end{align*}
where $\gamma = 0.5772\ldots$ is Euler's constant. Therefore, we have $|J| > \sqrt{N}$ for all prime numbers $N$. Thus for any $s \le \sqrt{N}$ we can choose $j_1, j_2, \ldots, j_s \in J$.

Theorem~\ref{thm_per} is almost the same when we replace $P_{N,s}$ by $P_{N,s, \boldsymbol{j}}$. One needs to replace $s$ in Proposition~\ref{WeilB} by $j_s$, which implies the error bound
\begin{equation*}
\mathrm{e}(K_{\alpha, p}, P_{N,s, \boldsymbol{j} }) \le \max\left(\frac{j_s-1}{\sqrt{N}},  \frac{s^{ \alpha / p}}{N^{\alpha}} \right).
\end{equation*}
See also \cite{B05}, where bounds on Weil sums with polynomials of the form $f(x) = a_1 x^{j_1} + a_2 x^{j_2} + \cdots + a_s x^{j_s}$ have been obtained under some restrictions.
\end{remark}

Another construction for which a bound similar to Weil's bound of Proposition~\ref{WeilB} holds, can be found in \cite{EHN94,NW11}, see \cite[Theorem~1]{EHN94} and also \cite[Theorem~1]{NW11} (where one chooses $k=1$ in the latter case). A bound similar to Theorem~\ref{thm_per} (and Theorem~\ref{thm_cos} using the tent transform below) can also be obtained for these constructions.

\section{Absolutely convergent Fourier cosine series}\label{sec_Cosine}

\subsection{Function space}

In the previous section we required that the functions are periodic. In this section we remove this condition by considering Fourier cosine series instead of Fourier series.

The cosine functions $\cos (k \pi x)$, $k \in \mathbb{N}_0$, form a complete orthogonal basis of $L^2([0,1])$. To normalize these functions we define
\begin{equation*}
\sigma_k(x) = \left\{\begin{array}{rl} 1 & \mbox{if } k = 0, \\ \sqrt{2} \cos(\pi k x) & \mbox{if } k \in \mathbb{N}. \end{array} \right.
\end{equation*}
Then we have
\begin{equation*}
\int_0^1 \sigma_k(x) \sigma_\ell(x) \,\mathrm{d} x = \left\{\begin{array}{rl} 1 & \mbox{if } k = \ell, \\ 0 & \mbox{if } k \neq \ell. \end{array} \right.
\end{equation*}
For vectors $\boldsymbol{k} \in \mathbb{N}_0^s$ and $\boldsymbol{x} \in [0,1]^s$ we define
\begin{equation*}
\sigma_{\boldsymbol{k}}(\boldsymbol{x}) = \prod_{j=1}^s \sigma_{k_j}(x_j).
\end{equation*}

For $g \in L^2([0,1]^s)$  we define the Fourier cosine coefficient
\begin{equation*}
\widehat{g}(\boldsymbol{k}) = \int_{[0,1]^s} g(\boldsymbol{x}) \sigma_{\boldsymbol{k}}(\boldsymbol{x}) \,\mathrm{d} \boldsymbol{x}
\end{equation*}
and the associated Fourier cosine series
\begin{equation*}
g(\boldsymbol{x}) \sim \sum_{\boldsymbol{k} \in \mathbb{N}_0^s} \widehat{g}(\boldsymbol{k}) \sigma_{\boldsymbol{k}}(\boldsymbol{x}) =: G(\boldsymbol{x}).
\end{equation*}

In the following we show how the theory developed for Fourier series can also be applied to Fourier cosine series by using the tent transform. This has previously been studied in the context of lattice rules in \cite{DNP13, H02}.

\subsubsection*{Tent transform}

The tent transform $\phi:[0,1] \to [0,1]$ is given by
\begin{equation*}
\phi(t) = 1 - |2t-1| = \left\{\begin{array}{rl} 2t & \mbox{if } 0 \le t < 1/2, \\ 2-2t & \mbox{if } 1/2 \le t \le 1. \end{array} \right.
\end{equation*}
For $\boldsymbol{t} = (t_1, t_2,\ldots, t_s) \in [0,1]^s$ we write $\phi(\boldsymbol{t}) = (\phi(t_1), \phi(t_2), \ldots, \phi(t_s))$.

The main properties of the tent transform which we use are the following: For any $k \in \mathbb{Z}$ and $0 \le t \le 1$ we have
\begin{equation}\label{eq_tent_cos}
\cos( \pi k \phi(t)) = \cos (2\pi k t)
\end{equation}
and for any $g \in L^2([0,1]^s)$ we have
\begin{equation*}
\int_{[0,1]^s} g(\boldsymbol{x}) \,\mathrm{d} \boldsymbol{x} = \int_{[0,1]^s} g(\phi(\boldsymbol{x})) \,\mathrm{d} \boldsymbol{x}.
\end{equation*}

Let $g \in L^2([0,1]^s)$, then the function $f(\boldsymbol{x}) = g(\phi(\boldsymbol{x}))$ is one-periodic. We have the following relation between the Fourier cosine coefficients of $g$ and the Fourier coefficients of $f$
\begin{align*}
\widehat{g}(\boldsymbol{k}_u, \boldsymbol{0}) = & \int_{[0,1]^s} g(\boldsymbol{x}) \sigma_{(\boldsymbol{k}_u, \boldsymbol{0})}(\boldsymbol{x}) \,\mathrm{d} \boldsymbol{x} \\ = & \int_{[0,1]^s} g(\phi(\boldsymbol{x})) \sigma_{(\boldsymbol{k}_u, \boldsymbol{0})}(\phi(\boldsymbol{x})) \,\mathrm{d} \boldsymbol{x} \\ = & 2^{-|u|/2} \int_{[0,1]^s} f(\boldsymbol{x}) \prod_{j \in u} (\mathrm{e}^{2\pi \mathrm{i} k_j x_j} + \mathrm{e}^{- 2\pi \mathrm{i} k_j x_j}) \,\mathrm{d} \boldsymbol{x} \\ = & 2^{-|u|/2} \sum_{v \subseteq u} \widetilde{f}(((-1)^{1_{j \in v}} k_j)_{j \in u}, \boldsymbol{0}),
\end{align*}
where $(((-1)^{1_{j \in v}} k_j)_{j \in u}, \boldsymbol{0})$ is the vector whose $j$th component is $-k_j$ for $j \in v$, $k_j$ for $j \in u \setminus v$ and $0$ otherwise. Thus we have
\begin{align*}
\sum_{\boldsymbol{k} \in \mathbb{Z}^s} \left|\widetilde{f}(\boldsymbol{k}) \right| = & \sum_{u \subseteq \{1,2,\ldots, s\}} \sum_{\boldsymbol{k}_u \in (\mathbb{Z} \setminus \{0\})^u} \left|\widetilde{f}((\boldsymbol{k}_u, \boldsymbol{0})) \right| \\ = & \sum_{u \subseteq \{1,2,\ldots, s\}} 2^{|u|/2} \sum_{\boldsymbol{k}_u \in \mathbb{N}^u} \left|\widehat{g}((\boldsymbol{k}_u, \boldsymbol{0})) \right|.
\end{align*}
The H\"older condition on $f$ is related to the H\"older condition on $g$ by
\begin{equation*}
|f|_{H_{\alpha,p} } = 2^\alpha |g|_{H_{\alpha,p} }.
\end{equation*}
It is therefore natural to define the following norm for (non-periodic) functions $g \in L^2([0,1]^s)$,
\begin{equation*}
\|g\|_{C_{\alpha, p}} = \sum_{u \subseteq \{1,2,\ldots, s\}} 2^{|u|/2} \sum_{\boldsymbol{k}_u \in \mathbb{N}^u} \left|\widehat{g}((\boldsymbol{k}_u, \boldsymbol{0})) \right| + 2^\alpha |g|_{H_{\alpha, p}}
\end{equation*}
and we denote the set of all functions $g \in L^2([0,1]^s)$ with finite norm $\|g\|_{C_{\alpha,p}} <\infty$ by $C_{\alpha, p}$.

By the above calculations, for $f(\boldsymbol{x}) = g(\phi(\boldsymbol{x}))$, we have the relation
\begin{equation*}
\|f\|_{K_{\alpha, p}} = \|g\|_{C_{\alpha, p}}.
\end{equation*}

From $f(\boldsymbol{x}) = g(\phi(\boldsymbol{x}))$ and the pointwise convergent of the Fourier series $F$ to the function $f$, we get
\begin{equation*}
g(\phi(\boldsymbol{x})) = f(\boldsymbol{x}) = F(\boldsymbol{x}) = G(\phi(\boldsymbol{x})) \quad \mbox{for all } \boldsymbol{x} \in [0,1]^s,
\end{equation*}
where the last equality follows from \eqref{eq_tent_cos} and from the fact that $F$ and $G$ are absolutely convergent. Thus we get that $G$ converges pointwise to $g$ for all points in $[0,1]^s$.

We note that Remark~\ref{rem_one_dim} applies also when applying the tent transformation for Fourier cosine series.

\subsection{Numerical integration}

We consider now the integration error using the following point set: Let $N$ be a prime number and let
\begin{equation*}
Q_{N,s} = \left\{ \boldsymbol{y}_n = \phi\left(\left\{\frac{n}{N}\right\}, \left\{\frac{n^2}{N} \right\}, \ldots, \left\{\frac{n^s}{N} \right\} \right), \quad \mbox{for } 0 \le n < N \right\},
\end{equation*}
where $\{z\} = z-\lfloor z \rfloor$ denotes the fractional part of $z \ge 0$. Note that $\boldsymbol{y}_n = \phi(\boldsymbol{x}_n)$ for $0 \le n < N$.

\begin{theorem}\label{thm_cos}
Let $0 < \alpha \le 1$ and $1 \le p \le \infty$. Let $N$ be a prime number and let $1 \le s < N$. Then we have
\begin{equation*}
\mathrm{e}(C_{\alpha, p} , Q_{N,s})  \le \max\left(\frac{s-1}{\sqrt{N}}, \frac{s^{ \alpha / p}}{N^\alpha} \right).
\end{equation*}
\end{theorem}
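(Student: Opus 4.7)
The strategy is to reduce Theorem~\ref{thm_cos} to Theorem~\ref{thm_per} via the tent transform, exploiting the correspondence already established in the preceding subsection. Given $g \in C_{\alpha,p}$ with $\|g\|_{C_{\alpha,p}} \le 1$, define its periodic companion $f(\boldsymbol{x}) = g(\phi(\boldsymbol{x}))$. The section preceding the theorem has already established two key identities for this pair: the norm equality $\|f\|_{K_{\alpha,p}} = \|g\|_{C_{\alpha,p}}$ and the pointwise identification of $f$ with its absolutely convergent Fourier series $F$. In particular $f \in K_{\alpha,p}$ with $\|f\|_{K_{\alpha,p}} \le 1$.

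Next I would line up the two quadrature problems. The measure-preservation property of $\phi$, namely $\int_{[0,1]^s} g(\boldsymbol{x})\,\mathrm{d}\boldsymbol{x} = \int_{[0,1]^s} g(\phi(\boldsymbol{x}))\,\mathrm{d}\boldsymbol{x} = \int_{[0,1]^s} f(\boldsymbol{x})\,\mathrm{d}\boldsymbol{x}$, equates the two target integrals. The definition of $Q_{N,s}$ gives $\boldsymbol{y}_n = \phi(\boldsymbol{x}_n)$, so
\begin{equation*}
\frac{1}{N}\sum_{n=0}^{N-1} g(\boldsymbol{y}_n) \;=\; \frac{1}{N}\sum_{n=0}^{N-1} g(\phi(\boldsymbol{x}_n)) \;=\; \frac{1}{N}\sum_{n=0}^{N-1} f(\boldsymbol{x}_n).
\end{equation*}
Subtracting therefore yields the exact identity
\begin{equation*}
\int_{[0,1]^s} g(\boldsymbol{x})\,\mathrm{d}\boldsymbol{x} - \frac{1}{N}\sum_{n=0}^{N-1} g(\boldsymbol{y}_n) \;=\; \int_{[0,1]^s} f(\boldsymbol{x})\,\mathrm{d}\boldsymbol{x} - \frac{1}{N}\sum_{n=0}^{N-1} f(\boldsymbol{x}_n).
\end{equation*}

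Finally I would invoke Theorem~\ref{thm_per} applied to $f$ with the point set $P_{N,s}$, which bounds the right-hand side by $\max\bigl(\tfrac{s-1}{\sqrt{N}},\, \tfrac{s^{\alpha/p}}{N^\alpha}\bigr)\|f\|_{K_{\alpha,p}} \le \max\bigl(\tfrac{s-1}{\sqrt{N}},\, \tfrac{s^{\alpha/p}}{N^\alpha}\bigr)\|g\|_{C_{\alpha,p}}$. Taking the supremum over $g$ in the unit ball of $C_{\alpha,p}$ gives the claimed error bound.

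There is no real obstacle here beyond verifying the identities; the content of the argument sits in the already-proved norm equality $\|f\|_{K_{\alpha,p}} = \|g\|_{C_{\alpha,p}}$ (which in turn uses $|f|_{H_{\alpha,p}} = 2^\alpha |g|_{H_{\alpha,p}}$ and the absolute-summability computation via the subset decomposition) and in Theorem~\ref{thm_per}. The mild thing to note is that one should not re-expand $g$ in a cosine series and estimate directly, which would require a separate Weil-type bound for cosine sums; the tent transform sidesteps this by passing everything back to the periodic setting where Proposition~\ref{WeilB} already does the work.
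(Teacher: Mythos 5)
Your proposal is correct and follows exactly the paper's own argument: set $f = g\circ\phi$, use the already-established norm identity $\|f\|_{K_{\alpha,p}} = \|g\|_{C_{\alpha,p}}$ together with the measure-preservation of the tent transform and $\boldsymbol{y}_n = \phi(\boldsymbol{x}_n)$ to equate the two integration errors, and then invoke Theorem~\ref{thm_per}. No differences worth noting.
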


\begin{proof}
Let $g \in C_{\alpha, p} $ and set $f(\boldsymbol{x}) = g(\phi(\boldsymbol{x}))$. Then $f \in K_{\alpha, p}$ and $\|f\|_{K_{\alpha, p} } = \|g\|_{C_{\alpha, p}}$. Further we have
\begin{align*}
\int_{[0,1]^s} g(\boldsymbol{x}) \,\mathrm{d} \boldsymbol{x} - \frac{1}{N} \sum_{n=0}^{N-1} g(\boldsymbol{y}_n) = & \int_{[0,1]^s} f(\boldsymbol{x}) \,\mathrm{d} \boldsymbol{x} - \frac{1}{N} \sum_{n=0}^{N-1} f(\boldsymbol{x}_n).
\end{align*}
Thus we have
\begin{equation*}
e(C_{\alpha, p}, Q_{N,s}) = e(K_{\alpha, p}, P_{N,s})
\end{equation*}
and therefore the result follows from Theorem~\ref{thm_per}.
\end{proof}

An immediate consequence of Theorem~\ref{thm_cos} is the following result.
\begin{corollary}
Numerical integration in $C_{\alpha, p}$ is polynomially tractable.
\end{corollary}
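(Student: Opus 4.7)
The plan is to read the tractability claim off Theorem~\ref{thm_cos} in a completely mechanical way: for a given target accuracy $\varepsilon \in (0,1)$ and dimension $s \ge 1$, produce a prime $N$ that makes the right-hand side of the bound
\begin{equation*}
e(C_{\alpha,p},Q_{N,s}) \le \max\!\left(\frac{s-1}{\sqrt{N}},\ \frac{s^{\alpha/p}}{N^{\alpha}}\right)
\end{equation*}
not exceed $\varepsilon$, and then observe that the resulting $N$ grows only polynomially in $s$ and $1/\varepsilon$. Since the initial error satisfies $e(C_{\alpha,p},\emptyset)=1$ (as noted in Section~4 for the spaces in this paper), bounding the worst-case error by $\varepsilon$ directly bounds $N(\varepsilon,s)$.

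The first step is to solve each branch of the maximum. The condition $(s-1)/\sqrt{N}\le \varepsilon$ gives $N \ge (s-1)^2/\varepsilon^2$, while $s^{\alpha/p}/N^{\alpha}\le \varepsilon$ gives $N \ge s^{1/p}\,\varepsilon^{-1/\alpha}$. I would therefore set
\begin{equation*}
N_0 \ = \ \max\!\left(\frac{(s-1)^2}{\varepsilon^2},\ s^{1/p}\varepsilon^{-1/\alpha},\ s^{2}\right),
\end{equation*}
the last term being included to make the Weil regime $s \le \sqrt{N}$ harmless (for $\varepsilon<1$ it is in fact already dominated). The second step is to promote $N_0$ to a prime: by Bertrand's postulate there exists a prime $N$ with $N_0 \le N \le 2N_0$, and this $N$ satisfies all hypotheses of Theorem~\ref{thm_cos} simultaneously.

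The third step is to assemble the bound. Since $N \le 2N_0$, we obtain
\begin{equation*}
N(\varepsilon,s) \ \le\ 2\max\!\left(\frac{(s-1)^2}{\varepsilon^2},\ s^{1/p}\varepsilon^{-1/\alpha},\ s^{2}\right) \ \le\ 2\left(\frac{s^2}{\varepsilon^2} + s^{1/p}\varepsilon^{-1/\alpha}+s^2\right),
\end{equation*}
which is of the form $C\,\varepsilon^{-q}s^{r}$ with, e.g., $q = \max(2,1/\alpha)$, $r=2$, and a constant $C$ depending only on $\alpha$ and $p$. This matches the definition of polynomial (QMC) tractability from \eqref{N_eps_s_bound} and completes the argument.

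I do not expect a genuine obstacle: everything reduces to the explicit error bound already proved in Theorem~\ref{thm_cos}. The only minor care is (i) making sure a prime exists in the required window, which Bertrand supplies, and (ii) tracking that the Weil hypothesis $s\le\sqrt{N}$ is respected, which is forced by the $(s-1)^2/\varepsilon^2$ term once $\varepsilon<1$. Apart from these bookkeeping items, the corollary is essentially a restatement of Theorem~\ref{thm_cos}.
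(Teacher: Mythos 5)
Your argument is correct and is exactly the intended one: the paper states this corollary as an immediate consequence of Theorem~\ref{thm_cos} without writing out a proof, and your derivation (solve each branch of the error bound for $N$, pass to a prime via Bertrand's postulate, and read off $N(\varepsilon,s)\le C\varepsilon^{-\max(2,1/\alpha)}s^{2}$) supplies precisely the omitted bookkeeping. No gaps.
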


\section{Absolutely convergent Walsh series}\label{sec_Walsh}

In this section we use Proposition~\ref{WeilA} with $m > 1$ instead of Proposition~\ref{WeilB} to construct quadrature rules. In this case we arrive at Walsh series instead of Fourier series.

Let $\psi$ be a fixed non-trivial additive character of $\mathbb{F}_{b^m}$. For $a \in \mathbb{F}_{b^m}$ define
\begin{equation*}
\theta_a(z) = \psi(a z).
\end{equation*}
The set $G^\ast = \{\theta_a: a \in \mathbb{F}_{b^m}\}$ defines all the additive characters of $\mathbb{F}_{b^m}$ and is a group induced by the additive group $(\mathbb{F}_{b^m}, +)$. Note that $G^\ast$ is a set of group homomorphisms from $(\mathbb{F}_{b^m}, +)$ to $T = \{z \in \mathbb{C}: |z|=1\}$, which is itself a group. This group $G^\ast$ is dual to $(\mathbb{F}_{b^m}, +)$. 

In the following we introduce some basic concepts which lead us to the definition of Walsh functions. We follow \cite{DM13} in our exposition, where more detailed information and references are provided. We define the direct product of denumerably many copies of $\mathbb{F}_{b^m}$ and the dual group $G^\ast$ by
\begin{align*}
\mathbb{X} = & \mathbb{F}_{b^m}^{\mathbb{N}} = \{\boldsymbol{x} = (\xi_1, \xi_2, \ldots): \xi_i \in \mathbb{F}_{b^m} \}, \\ \mathbb{K} = & \{\boldsymbol{k} = (\kappa_1, \kappa_2, \ldots) \in (G^\ast)^{\mathbb{N}}: \kappa_i = 0 \mbox{ for almost all } i\}.
\end{align*}
With this definition $\mathbb{X}$ and $\mathbb{K}$ are dual to each other via the pairing
\begin{equation*}
\mathbb{K} \times \mathbb{X} \to T, \quad (\kappa_1, \kappa_2, \ldots) \bullet (\xi_1, \xi_2, \ldots) = \prod_{i=1}^\infty \kappa_i(\xi_i) \in T.
\end{equation*}

Let $\varphi: \mathbb{Z}_{b^m} \to \mathbb{F}_{b^m}$ and $\phi: \mathbb{Z}_{b^m} \to G^\ast$ be bijections with $\varphi(0) = 0$ and $\phi(0) = 0$. We assume that $\phi$ is defined via $\phi(a) = \theta_{\mu(a)}$, where $\mu:\mathbb{Z}_{b^m} \to \mathbb{F}_{b^m}$ is another bijection with $\mu(0) = 0$. We extend these definitions (by an abuse of notation) to
\begin{align*}
\varphi:[0,1) \to & \mathbb{X}, \\ \varphi(x) = & (\varphi(\xi_1), \varphi(\xi_2), \ldots),
\end{align*}
where $x$ has $b^m$-adic expansion $x = \xi_1 b^{-m} + \xi_2 b^{-2m} + \cdots$ and where we use the finite expansion if $x$ is a $b^m$-adic rational. Further we set
\begin{align*}
\phi: \mathbb{N}_0 \to & \mathbb{K}, \\ \phi(k) = & (\phi(\kappa_0), \phi(\kappa_1), \ldots, \phi(\kappa_{a-1}), 0, 0, \ldots),
\end{align*}
where $k$ has $b^m$-adic expansion $k = \kappa_0 + \kappa_1 b^m + \cdots + \kappa_{a-1} b^{m (a-1)}$.

We now define the Walsh functions by
\begin{align*}
\mathrm{wal}: [0,1) \to & T, \\ \mathrm{wal}_k(x) = & \phi(k) \bullet \varphi(x) = \prod_{i=1}^a \phi(\kappa_{i-1})(\varphi(\xi_i)).
\end{align*}
We also define a higher dimensional analogue by
\begin{align*}
\mathrm{wal}: [0,1)^s \to & T, \\ \mathrm{wal}_{\boldsymbol{k}}(\boldsymbol{x}) = & \prod_{j=1}^s \mathrm{wal}_{k_j}(x_j).
\end{align*}
The Walsh functions are an orthonormal basis of $L^2([0,1]^s)$. For more background information on Walsh functions in the context of numerical integration see also \cite{LNS96}.

\subsection{Construction of point set}

We now describe how to construct point sets. In order to be able to use Proposition~\ref{WeilA}, we need to ensure that the polynomials $f \in \mathbb{F}_{b^m}[z]$ of degree $s$ satisfy $\gcd(s, b^m) = 1$. The last condition is equivalent to $b \not\!| s$, since $b$ is a prime number. In order to satisfy this condition for all admissible choices of the coefficients of $f$, we use the sequence of exponents $1, 2, \ldots, b-1, b+1, b+2,\ldots, 2b-1, 2b+1, \ldots$, i.e., we remove all multiples of $b$ from the natural numbers. By assumption, $b$ is a prime number, therefore every number in this sequence has greatest common divisor $1$ with $b$. Let $c_j \in \mathbb{N}$ be the $j$th element in this sequence, i.e,  $$c_j - \left \lfloor \frac{c_j}{b} \right\rfloor  =j.$$ Note that 
\begin{equation}\label{deg_poly}
c_j \le j \frac{b}{b-1}. 
\end{equation}

Let $\varphi: \mathbb{Z}_{b^m} \to \mathbb{F}_{b^m}$ be given as above and let $\varphi^{-1}: \mathbb{F}_{b^m} \to \mathbb{Z}_{b^m}$ denote the inverse mapping $\varphi^{-1}(\varphi(n)) = n$. Then for $n \in \mathbb{Z}_{b^m}$ we define
\begin{equation}\label{constr_point_dig}
\boldsymbol{z}_n = \left(\frac{ \varphi^{-1}[ (\varphi(n))^{c_1} ] }{b^m}, \frac{\varphi^{-1} [(\varphi(n))^{c_2} ] }{b^m}, \ldots, \frac{\varphi^{-1} [ (\varphi(n))^{c_s} ] }{b^m} \right) \in [0,1)^s,
\end{equation}
and we set
\begin{equation*}
R_{b^m, s} = \{\boldsymbol{z}_0, \boldsymbol{z}_1, \ldots, \boldsymbol{z}_{b^m-1}\}.
\end{equation*}

\subsubsection*{An elementary construction}

We now describe a special case of how to construct the points. Let $p \in \mathbb{Z}_b[x]$ be an irreducible polynomial of degree $m$. For a polynomial $q(x) = q_0 + q_1 x + \cdots + q_{m-1} x^{m-1} \in \mathbb{Z}_b[x]/ (p)$ we define the mapping
\begin{align*}
\nu_m: \mathbb{Z}_b[x]/(p) & \to [0,1) \\ \nu_m(q) = & \frac{q_0}{b} + \frac{q_1}{b^2}  + \cdots + \frac{q_{m-1}}{b^m}.
\end{align*}

Let $0 \le n < b^m$ be given by $n = \zeta_0 + \zeta_1 b + \cdots + \zeta_{m-1} b^{m-1}$ and let $n(x) = \zeta_0 + \zeta_1 x + \cdots + \zeta_{m-1} x^{m-1} \in \mathbb{Z}_b[x]/(p)$ denote the associated polynomial. Then we set
\begin{equation*}
z_{j,n} = \nu_m(n^{c_j}(x))
\end{equation*}
(where $n^{c_j}(x)$ is computed in $\mathbb{Z}_b[x]/(p)$, i.e., modulo $p$) and we define the points
\begin{equation*}
\boldsymbol{z}_n = (z_{1,n}, z_{2,n},\ldots, z_{s,n}) \quad \mbox{for } 0 \le n < b^m.
\end{equation*}
Let
\begin{equation*}
R_{b^m,s} = \{\boldsymbol{z}_0, \boldsymbol{z}_1, \ldots, \boldsymbol{z}_{b^m-1}\}.
\end{equation*}

A fast construction can again be obtained in the following way. Let $p \in \mathbb{Z}_b[x]$ be a primitive polynomial. Then the polynomial $g(x) =x$ is a primitive element in the multiplicative group $(\mathbb{Z}_b[x]/(p) ) \setminus \{0\}$, that is, $x^{n} \pmod{p}$ for $0 \le n < b^m-1$ generates all nonzero elements in $\mathbb{Z}_b[x]/(p)$. For $0 \le n < b^m-1$ let
\begin{equation*}
a_n(x) \equiv x^{n} \pmod{p}.
\end{equation*}
Then set $\boldsymbol{z}'_0 = (0, 0, \ldots, 0)$ and for $0 \le n < b^m-1$ set
\begin{equation*}
\boldsymbol{z}'_{n+1} = (\nu_m(a_n), \nu_m(a_{2n \pmod{b^m-1}}), \ldots, \nu_m(a_{sn \pmod{b^m-1}})).
\end{equation*}
Then we have
\begin{equation*}
R_{b^m,s} = \{\boldsymbol{z}'_0, \boldsymbol{z}'_1, \ldots, \boldsymbol{z}'_{b^m-1}\}.
\end{equation*}

\subsection{Weil's result in terms of Walsh functions}

We can now write Proposition~\ref{WeilA} in the following way. For $\boldsymbol{k} \in \mathbb{N}_0^s$ with $b^m \not\,\mid \boldsymbol{k}$, $\boldsymbol{k} = (k_1, k_2, \ldots, k_s)$ and $k_j$ having $b^m$-adic representation $k_j = \kappa_{j,0} + \kappa_{j,1} b^m + \kappa_{j,2} b^{2m} +  \cdots$ we have
\begin{align*}
\mathrm{wal}_{\boldsymbol{k}}(\boldsymbol{z}_n) = & \prod_{j=1}^s \phi(\kappa_{j,0})((\varphi(n))^{c_i}) \\ = & \psi\left(\mu(\kappa_{1,0}) (\varphi(n))^{c_1} + \mu(\kappa_{2,0}) (\varphi(n))^{c_2} + \cdots + \mu(\kappa_{s,0}) (\varphi(n))^{c_s} \right).
\end{align*}
Note that the choice of $c_1, c_2, \ldots, c_s$ ensures that the degree of the polynomial $\phi(\kappa_{1,0}) x^{c_1} + \phi(\kappa_{2,0}) x^{c_2} + \cdots + \phi(\kappa_{s,0}) x^{c_s}$ is not divisible by $b$. By the construction of the $c_1 < c_2 < \cdots < c_s$, the degree of this polynomial is now bounded by $s \frac{b}{b-1}$, see \eqref{deg_poly}. Proposition~\ref{WeilA} now implies the following result.

\begin{proposition}\label{WeilC}
Let $\mathbb{F}_{b^m}$ be a finite field with $b^m$ elements, where $b$ is prime and $m \ge 1$. Let $s \ge 1$ be an integer. Let $\boldsymbol{\ell} \in \mathbb{N}_0^s$ be such that $b^m \not\,\,\mid \boldsymbol{\ell}$ and let $\boldsymbol{z}_n$ be given by \eqref{constr_point_dig}. Then we have
\begin{equation}\label{Weil_boundC}
\left|\sum_{n=0}^{b^m-1} \mathrm{wal}_{\boldsymbol{\ell}}( \boldsymbol{z}_n ) \right| \le \left(s \frac{b}{b-1} -1 \right) \sqrt{b^m}.
\end{equation}
\end{proposition}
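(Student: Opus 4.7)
The plan is to reduce the Walsh sum directly to a Weil exponential sum over $\mathbb{F}_{b^m}$ and then apply Proposition~\ref{WeilA}. All the needed algebraic identities are essentially set up in the paragraph preceding the proposition, so the real work is verifying that the hypotheses of Weil's bound are met by the polynomial that naturally appears.

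First, I would take the formula displayed just above the proposition, namely
\begin{equation*}
\mathrm{wal}_{\boldsymbol{\ell}}(\boldsymbol{z}_n) = \psi\!\left(\mu(\lambda_{1,0})(\varphi(n))^{c_1} + \mu(\lambda_{2,0})(\varphi(n))^{c_2} + \cdots + \mu(\lambda_{s,0})(\varphi(n))^{c_s}\right),
\end{equation*}
where $\lambda_{j,0}$ is the $b^m$-adic digit of $\ell_j$ indexed by $0$. Because $\varphi:\mathbb{Z}_{b^m}\to\mathbb{F}_{b^m}$ is a bijection, summing over $n\in\{0,1,\ldots,b^m-1\}$ is the same as summing $\psi(f(z))$ over $z\in\mathbb{F}_{b^m}$, where
\begin{equation*}
f(x)=\sum_{j=1}^{s}\mu(\lambda_{j,0})\,x^{c_j}\in\mathbb{F}_{b^m}[x].
\end{equation*}

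Next, I would verify the three hypotheses of Proposition~\ref{WeilA} for this $f$. To see that $f$ is nonzero, note that $b^m\nmid\boldsymbol{\ell}$ forces $\lambda_{j,0}\neq 0$ for some $j$, and since $\mu$ is a bijection with $\mu(0)=0$ the corresponding coefficient $\mu(\lambda_{j,0})$ is nonzero. Let $S$ be the largest index $j$ with $\mu(\lambda_{j,0})\neq 0$; then $\deg f = c_S$. The defining property of the sequence $(c_j)$ is that no $c_j$ is divisible by $b$, so $\gcd(c_S,b)=1$, and since $b$ is prime this gives $\gcd(\deg f, b^m)=1$. The bound \eqref{deg_poly} yields $\deg f = c_S \le s\,\tfrac{b}{b-1}$, which (implicitly in the regime of interest) is less than $b^m$; this supplies the condition $\deg f<b^m$ needed by \eqref{cond_sN}.

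With all hypotheses in place, Proposition~\ref{WeilA} applied to $f$ and to the fixed non-trivial character $\psi$ gives
\begin{equation*}
\left|\sum_{z\in\mathbb{F}_{b^m}}\psi(f(z))\right|\le(\deg f-1)\sqrt{b^m}\le\left(s\,\frac{b}{b-1}-1\right)\sqrt{b^m},
\end{equation*}
which is exactly \eqref{Weil_boundC}. The only real obstacle is bookkeeping: one has to be careful that the choice $c_S$ (rather than all $c_j$) governs the degree, and to invoke the elementary fact that $c_j\le j\,b/(b-1)$ established in \eqref{deg_poly} so that the bound is monotone in $s$. Everything else is a direct translation through the bijections $\varphi$ and $\mu$.
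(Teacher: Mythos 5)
Your proposal is correct and follows essentially the same route as the paper, which proves Proposition~\ref{WeilC} by exactly this reduction: rewriting $\mathrm{wal}_{\boldsymbol{\ell}}(\boldsymbol{z}_n)$ as $\psi(f(\varphi(n)))$ for the polynomial $f(x)=\sum_j \mu(\lambda_{j,0})x^{c_j}$, using that the $c_j$ are coprime to $b$ and bounded via \eqref{deg_poly}, and then invoking Proposition~\ref{WeilA}. Your explicit checks that $f\neq 0$ (from $b^m\nmid\boldsymbol{\ell}$) and that the degree is $c_S$ for the largest nonzero coefficient are exactly the bookkeeping the paper leaves implicit, and your remark that $\deg f<b^m$ holds only in the regime $s\lesssim\sqrt{b^m}$ correctly identifies a hypothesis the paper's statement suppresses.
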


\subsection{Function space}

We now introduce a space of functions which can be represented by Walsh series. Let $h \in L^2([0,1]^s)$ and for $\boldsymbol{k} \in \mathbb{N}_0^s$ define the Walsh coefficient
\begin{equation*}
\widehat{h}(\boldsymbol{k}) = \int_{[0,1]^s} h(\boldsymbol{x}) \overline{\mathrm{wal}_{\boldsymbol{k}}(\boldsymbol{x}}) \,\mathrm{d} \boldsymbol{x}.
\end{equation*}
Then we have
\begin{equation*}
h(\boldsymbol{x})  \sim \sum_{\boldsymbol{k} \in \mathbb{N}_0^s} \widehat{h}(\boldsymbol{k}) \mathrm{wal}_{\boldsymbol{k}}(\boldsymbol{x}) := H(\boldsymbol{x}).
\end{equation*}
We define the norm
\begin{equation*}
\|h\|_{W_{\alpha,p}} = \sum_{\boldsymbol{k} \in \mathbb{N}_0^s} \left|\widehat{h}(\boldsymbol{k}) \right| + |h|_{H_{\alpha,p}}.
\end{equation*}
For $0 < \alpha \le 1$ and $1 \le p \le \infty$ we define the space $W_{\alpha,p}$ of functions $h:[0,1)^s \to \mathbb{R}$ as the set of all functions in $L^2([0,1]^s)$ with finite norm, that is,
\begin{equation*}
W_{\alpha,p} = \{h \in L^2([0,1]^s): \|h\|_{W_{\alpha, p}} < \infty\}.
\end{equation*}

For $h \in W_{\alpha,p}$ we have that $h$ is continuous and $\sum_{\boldsymbol{k} \in \mathbb{N}_0^s} \left|\widetilde{h}(\boldsymbol{k})\right| < \infty$, therefore, an analogous result as \cite[Theorem~A.20]{DP10} implies that $h(\boldsymbol{x}) = H(\boldsymbol{x})$ for all $\boldsymbol{x} \in [0,1)^s$.

\subsection{Numerical integration}

The following lemma is most likely known (for completeness we include a proof).
\begin{lemma}
For any $0 < \alpha \le 1$, $1 \le p \le \infty$, $h \in W_{\alpha, p}$ and $L \in \mathbb{N}$ with $b^m \mid L$, we have
\begin{equation*}
\left| \sum_{\boldsymbol{k} \in \mathbb{N}_0^s \setminus \{\boldsymbol{0}\} } \widetilde{h}(L \boldsymbol{k}) \right| \le \frac{s^{ \alpha / p}}{L^\alpha} |h|_{H_{\alpha, p}}.
\end{equation*}
\end{lemma}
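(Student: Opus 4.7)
The plan is to mimic the proof of Lemma~\ref{lem_Fourier_Holder}, replacing the Fourier-character orthogonality on the grid $\{0,1,\ldots,L-1\}^s/L$ by the corresponding Walsh-character identity, and then repeating the mean-value-theorem argument verbatim. I will work with $L=b^{mr}$ (the form relevant to the application, where ultimately $L=b^m$), since the condition $b^m\mid L$ is naturally read this way and the digit-wise structure then factors cleanly.

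First I would establish the Walsh analogue of the character orthogonality,
\[
\frac{1}{L^s}\sum_{\boldsymbol{\ell}\in\{0,1,\ldots,L-1\}^s}\mathrm{wal}_{\boldsymbol{k}}\!\left(\frac{\boldsymbol{\ell}}{L}\right)
= \begin{cases} 1 & \text{if } L\mid \boldsymbol{k},\\ 0 & \text{otherwise.}\end{cases}
\]
By tensorization this reduces to $s=1$: writing $\ell/L$ in $b^m$-adic form as $\xi_1 b^{-m}+\cdots+\xi_r b^{-rm}$ and $k=\kappa_0+\kappa_1 b^m+\cdots$, only the first $r$ digits of $k$ contribute to $\mathrm{wal}_k(\ell/L)$, and the inner sum $\sum_{\xi\in\mathbb{Z}_{b^m}}\phi(\kappa_{i-1})(\varphi(\xi))$ equals $b^m$ when the additive character $\phi(\kappa_{i-1})$ is trivial and $0$ otherwise. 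Multiplying over $i=1,\ldots,r$ gives $L$ precisely when $\kappa_0=\cdots=\kappa_{r-1}=0$, i.e.\ when $L\mid k$.

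With this in hand, I use the pointwise identity $h(\boldsymbol{x})=\sum_{\boldsymbol{k}}\widehat{h}(\boldsymbol{k})\mathrm{wal}_{\boldsymbol{k}}(\boldsymbol{x})$ (valid by the remark preceding the lemma) and interchange summations, which is justified by $\sum_{\boldsymbol{k}}|\widehat{h}(\boldsymbol{k})|<\infty$, to obtain
\[
\frac{1}{L^s}\sum_{\boldsymbol{\ell}}h\!\left(\frac{\boldsymbol{\ell}}{L}\right)=\sum_{\boldsymbol{k}\in\mathbb{N}_0^s}\widehat{h}(L\boldsymbol{k}).
\]
Subtracting $\widehat{h}(\boldsymbol{0})=\int_{[0,1]^s}h(\boldsymbol{x})\,\mathrm{d}\boldsymbol{x}$ recasts the quantity to be bounded as an equal-weight cubature error over the uniform grid. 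The finish is then identical to the Fourier case: continuity of $h$ (from the finite Hölder seminorm) together with the mean value theorem yields a point $\boldsymbol{y}_{\boldsymbol{\ell}}\in[\boldsymbol{\ell}/L,(\boldsymbol{\ell}+\boldsymbol{1})/L)$ with $L^s\int_{[\boldsymbol{\ell}/L,(\boldsymbol{\ell}+\boldsymbol{1})/L)}h(\boldsymbol{x})\,\mathrm{d}\boldsymbol{x}=h(\boldsymbol{y}_{\boldsymbol{\ell}})$, and the diameter estimate $\|\boldsymbol{\ell}/L-\boldsymbol{y}_{\boldsymbol{\ell}}\|_{\ell^p}\le s^{1/p}/L$ combined with the Hölder bound produces the target inequality $s^{\alpha/p}L^{-\alpha}|h|_{H_{\alpha,p}}$.

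The main obstacle is Step~1, since Walsh-character orthogonality on an arithmetic grid relies on the algebraic structure of $b^m$-adic digits rather than the cleaner cancellation of roots of unity in the Fourier proof; once the digit identity is set up, the remainder is routine. A minor caveat is that the scheme as written needs $L$ to be a power of $b^m$, not merely an arbitrary multiple of $b^m$, which is consistent with the intended use $L=b^m$.
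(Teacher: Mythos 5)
Your proof follows the same route as the paper's: establish the Walsh orthogonality identity on the grid $\{0,1,\ldots,L-1\}^s/L$, rewrite the aliasing sum $\sum_{\boldsymbol{k}\neq\boldsymbol{0}}\widehat{h}(L\boldsymbol{k})$ as an equal-weight cubature error, and finish with the mean value theorem and the H\"older bound exactly as in Lemma~\ref{lem_Fourier_Holder}. Your caveat that the orthogonality step genuinely requires $L$ to be a power of $b^m$ (rather than an arbitrary multiple, for which the digit structure of $\{\ell/L\}$ does not factor) is a fair observation that the paper glosses over, but it is harmless since the lemma is only invoked with $L=N=b^m$.
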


\begin{proof}
For any $L \in \mathbb{N}$ with $b^m \mid L$ we have
\begin{align*}
 \frac{1}{L^s} \sum_{\boldsymbol{\ell} \in \{0,1, \ldots, L-1\}^s} h\left(\frac{\boldsymbol{\ell}}{L} \right) = & \sum_{\boldsymbol{k} \in \mathbb{N}_0^s} \widehat{h}(\boldsymbol{k}) \frac{1}{L^s} \sum_{\boldsymbol{\ell} \in \{0, 1, \ldots, L-1\}^s} \mathrm{wal}_{\boldsymbol{k}}( \boldsymbol{\ell} / L) = \sum_{\boldsymbol{k} \in \mathbb{N}_0^s} \widehat{h}(L \boldsymbol{k}),
\end{align*}
where the last inequality follows since
\begin{equation*}
\frac{1}{L^s} \sum_{\boldsymbol{\ell} \in \{0, 1, \ldots, L-1\}^s} \mathrm{wal}_{\boldsymbol{k} }( \boldsymbol{\ell} / L) = \prod_{j=1}^s \frac{1}{L} \sum_{\ell_j=0}^{L-1} \mathrm{wal}_{k_j}(\ell_j/L) = \left\{\begin{array}{rl}  1 & \mbox{if } L | \boldsymbol{k}, \\ 0 & \mbox{otherwise}.  \end{array} \right.
\end{equation*}
In the last step we need the condition that $b^m \mid L$. The remainder of the proof follows by the same arguments as the proof of Lemma~\ref{lem_Fourier_Holder}.
\end{proof}

We can now obtain a bound on the worst-case integration error for $W_{\alpha, p}$ using the point set $R_{N,s}$. The proof is essentially the same as the proof of Theorem~\ref{thm_per} and is therefore omitted.

\begin{theorem}\label{thm_wal}
Let $0 < \alpha \le 1$ and $1 \le p \le \infty$. Let $b$ be a prime number, $m \ge 1$ and $N = b^m$. Then for any natural number $1 \le s < N$ we have
\begin{equation*}
\mathrm{e}(W_{\alpha, p}, R_{N,s}) \le \max\left(\frac{b(s-1)+1}{(b-1) \sqrt{N}},  \frac{s^{ \alpha / p}}{N^{\alpha}} \right).
\end{equation*}
\end{theorem}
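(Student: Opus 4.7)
The plan is to mimic the proof of Theorem~\ref{thm_per} in the Walsh setting, using Proposition~\ref{WeilC} in place of Proposition~\ref{WeilB} and the Walsh analogue of Lemma~\ref{lem_Fourier_Holder} (established just above) in place of the Fourier version. Fix $h \in W_{\alpha,p}$ with $\|h\|_{W_{\alpha,p}} \le 1$. Since $h \in W_{\alpha,p}$ satisfies $h = H$ pointwise, I can substitute the Walsh series and interchange the finite sum over $n$ with the absolutely convergent sum over $\boldsymbol{k}$, obtaining
\begin{equation*}
\frac{1}{N} \sum_{n=0}^{N-1} h(\boldsymbol{z}_n) - \int_{[0,1]^s} h(\boldsymbol{x}) \,\mathrm{d}\boldsymbol{x} = \sum_{\boldsymbol{k} \in \mathbb{N}_0^s \setminus \{\boldsymbol{0}\}} \widehat{h}(\boldsymbol{k}) \, \frac{1}{N} \sum_{n=0}^{N-1} \mathrm{wal}_{\boldsymbol{k}}(\boldsymbol{z}_n).
\end{equation*}

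Next I split the outer sum according to whether $N = b^m$ divides $\boldsymbol{k}$ componentwise. For $b^m \not\,\mid \boldsymbol{k}$, Proposition~\ref{WeilC} gives the uniform bound
\begin{equation*}
\left| \frac{1}{N} \sum_{n=0}^{N-1} \mathrm{wal}_{\boldsymbol{k}}(\boldsymbol{z}_n) \right| \le \frac{1}{\sqrt{N}} \left( s \frac{b}{b-1} - 1 \right) = \frac{b(s-1)+1}{(b-1)\sqrt{N}},
\end{equation*}
so this part is bounded by $\frac{b(s-1)+1}{(b-1)\sqrt{N}} \sum_{\boldsymbol{k}} |\widehat{h}(\boldsymbol{k})|$. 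For $b^m \mid \boldsymbol{k}$, I first observe that $\mathrm{wal}_{\boldsymbol{k}}(\boldsymbol{z}_n) = 1$ for every $n$: each coordinate of $\boldsymbol{z}_n$ has only a leading $b^m$-adic digit, and when $b^m \mid k_j$ the leading $b^m$-adic digit $\kappa_{j,0}$ of $k_j$ is zero, so the character $\phi(\kappa_{j,0})$ is trivial. Hence the $N\mid\boldsymbol{k}$ contribution is exactly $\sum_{\boldsymbol{k} \in \mathbb{N}_0^s \setminus \{\boldsymbol{0}\}} \widehat{h}(N\boldsymbol{k})$, and the preceding lemma (applied with $L=N$, which satisfies $b^m \mid L$) bounds its absolute value by $\frac{s^{\alpha/p}}{N^\alpha} |h|_{H_{\alpha,p}}$.

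Combining the two pieces and factoring out the maximum yields
\begin{equation*}
\left| \frac{1}{N} \sum_{n=0}^{N-1} h(\boldsymbol{z}_n) - \int_{[0,1]^s} h(\boldsymbol{x}) \,\mathrm{d}\boldsymbol{x} \right| \le \max\!\left( \frac{b(s-1)+1}{(b-1)\sqrt{N}}, \frac{s^{\alpha/p}}{N^\alpha} \right) \left( \sum_{\boldsymbol{k} \in \mathbb{N}_0^s} |\widehat{h}(\boldsymbol{k})| + |h|_{H_{\alpha,p}} \right),
\end{equation*}
and the parenthesized factor is $\|h\|_{W_{\alpha,p}} \le 1$, giving the claimed worst-case bound after taking the supremum over the unit ball.

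I expect no real obstacle; the only subtle point compared to the Fourier proof of Theorem~\ref{thm_per} is verifying that $\mathrm{wal}_{\boldsymbol{k}}(\boldsymbol{z}_n) = 1$ when $b^m \mid \boldsymbol{k}$, which hinges on the specific structure of the Walsh functions together with the fact that $z_{j,n} \in \{0, 1/b^m, 2/b^m, \ldots, (b^m-1)/b^m\}$ by construction~\eqref{constr_point_dig}. Everything else is a direct transcription of the Fourier argument with $\mathrm{e}^{2\pi\mathrm{i}\boldsymbol{k}\cdot\boldsymbol{x}}$ replaced by $\mathrm{wal}_{\boldsymbol{k}}(\boldsymbol{x})$ and the corresponding character sum bound.
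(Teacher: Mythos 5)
Your proof is correct and follows exactly the route the paper intends: the paper omits the proof of Theorem~\ref{thm_wal}, stating only that it is essentially the same as that of Theorem~\ref{thm_per}, and your argument is precisely that transcription, with Proposition~\ref{WeilC} replacing Proposition~\ref{WeilB} and the Walsh H\"older lemma (with $L=N=b^m$) replacing Lemma~\ref{lem_Fourier_Holder}. Your explicit verification that $\mathrm{wal}_{\boldsymbol{k}}(\boldsymbol{z}_n)=1$ when $b^m\mid\boldsymbol{k}$, and the algebraic identity $s\tfrac{b}{b-1}-1=\tfrac{b(s-1)+1}{b-1}$, correctly supply the details the paper leaves implicit.
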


An immediate consequence of Theorem~\ref{thm_wal} is the following result.
\begin{corollary}
Numerical integration in $W_{\alpha, p}$ is polynomially tractable.
\end{corollary}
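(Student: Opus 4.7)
The plan is to mirror the proof of Theorem~\ref{thm_per}, with Proposition~\ref{WeilC} playing the role of Proposition~\ref{WeilB} and the Walsh analogue of Lemma~\ref{lem_Fourier_Holder} (stated immediately above) playing the role of the Fourier version. For $h \in W_{\alpha,p}$ the Walsh series converges pointwise to $h$ (as noted after the definition of $W_{\alpha,p}$), so
\begin{equation*}
\frac{1}{N}\sum_{n=0}^{N-1} h(\boldsymbol{z}_n) - \int_{[0,1]^s} h(\boldsymbol{x})\,\mathrm{d}\boldsymbol{x} = \sum_{\boldsymbol{k}\in\mathbb{N}_0^s\setminus\{\boldsymbol{0}\}} \widehat{h}(\boldsymbol{k})\,\frac{1}{N}\sum_{n=0}^{N-1}\mathrm{wal}_{\boldsymbol{k}}(\boldsymbol{z}_n).
\end{equation*}

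Next I would split the sum according to whether $N\mid\boldsymbol{k}$ or $N\not\mid\boldsymbol{k}$. For the indices with $N\not\mid\boldsymbol{k}$, Proposition~\ref{WeilC} gives
\begin{equation*}
\left|\frac{1}{N}\sum_{n=0}^{N-1}\mathrm{wal}_{\boldsymbol{k}}(\boldsymbol{z}_n)\right| \le \frac{s\tfrac{b}{b-1}-1}{\sqrt{N}} = \frac{b(s-1)+1}{(b-1)\sqrt{N}},
\end{equation*}
so this contribution is at most $\frac{b(s-1)+1}{(b-1)\sqrt{N}}\sum_{\boldsymbol{k}\in\mathbb{N}_0^s}|\widehat{h}(\boldsymbol{k})|$. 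For the indices with $N\mid\boldsymbol{k}$, write $\boldsymbol{k}=N\boldsymbol{k}'$ with $\boldsymbol{k}'\in\mathbb{N}_0^s\setminus\{\boldsymbol{0}\}$; here each $N k'_j = b^m k'_j$ has its lowest $b^m$-adic digit equal to $0$, so by the Walsh function definition $\mathrm{wal}_{N k'_j}(\boldsymbol{z}_{n,j}) = \phi(0)(\varphi(n)^{c_j}) = 1$. Hence $\frac{1}{N}\sum_n \mathrm{wal}_{N\boldsymbol{k}'}(\boldsymbol{z}_n)=1$, and this contribution collapses to $\bigl|\sum_{\boldsymbol{k}'\in\mathbb{N}_0^s\setminus\{\boldsymbol{0}\}}\widehat{h}(N\boldsymbol{k}')\bigr|$, which by the preceding lemma is bounded by $\frac{s^{\alpha/p}}{N^\alpha}|h|_{H_{\alpha,p}}$.

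Combining the two pieces,
\begin{equation*}
\left|\frac{1}{N}\sum_{n=0}^{N-1} h(\boldsymbol{z}_n) - \int_{[0,1]^s} h(\boldsymbol{x})\,\mathrm{d}\boldsymbol{x}\right| \le \frac{b(s-1)+1}{(b-1)\sqrt{N}}\sum_{\boldsymbol{k}\in\mathbb{N}_0^s}|\widehat{h}(\boldsymbol{k})| + \frac{s^{\alpha/p}}{N^\alpha}|h|_{H_{\alpha,p}},
\end{equation*}
which is at most $\max\!\bigl(\frac{b(s-1)+1}{(b-1)\sqrt{N}},\frac{s^{\alpha/p}}{N^\alpha}\bigr)\|h\|_{W_{\alpha,p}}$. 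Taking the supremum over the unit ball of $W_{\alpha,p}$ delivers the claimed bound on $\mathrm{e}(W_{\alpha,p},R_{N,s})$.

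The main obstacle is not any estimation — those are inherited from Proposition~\ref{WeilC} and the previous lemma — but rather confirming the \emph{exact} arithmetic identity $\mathrm{wal}_{N\boldsymbol{k}'}(\boldsymbol{z}_n)=1$. This step uses the specific digit structure: the points $\boldsymbol{z}_n$ lie on the grid of $b^m$-adic rationals with only one nonzero base-$b^m$ digit, and Walsh indices divisible by $N=b^m$ have vanishing lowest digit, so only the trivial character $\phi(0)$ pairs against $\varphi(n)^{c_j}$. Once that observation is in place, the rest is the same decomposition as in Theorem~\ref{thm_per}, and the $\tfrac{b}{b-1}$ inflation of the polynomial degree in \eqref{deg_poly} is exactly what produces the prefactor $\tfrac{b(s-1)+1}{b-1}$ in the bound.
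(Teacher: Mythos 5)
Your derivation of the worst-case error bound is correct and is precisely the argument the paper leaves implicit: Theorem~\ref{thm_wal} is stated with its proof omitted (``essentially the same as the proof of Theorem~\ref{thm_per}''), and you have supplied exactly that proof, including the two points that genuinely need checking, namely that $\mathrm{wal}_{N\boldsymbol{k}'}(\boldsymbol{z}_n)=1$ because each coordinate of $\boldsymbol{z}_n$ has a single nonzero $b^m$-adic digit while indices divisible by $b^m$ have vanishing lowest digit, and that the degree bound \eqref{deg_poly} turns the Weil constant $s\tfrac{b}{b-1}-1$ into $\tfrac{b(s-1)+1}{b-1}$. The application of the Walsh analogue of Lemma~\ref{lem_Fourier_Holder} with $L=N=b^m$ is legitimate since $b^m\mid L$.

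However, the statement you were asked to prove is the corollary, i.e.\ \emph{polynomial tractability}, and your argument stops at the error bound without ever invoking the definition of tractability; as written it proves Theorem~\ref{thm_wal}, not the corollary. To close the gap you must convert the bound into an estimate $N(\varepsilon,s)\le C\varepsilon^{-q}s^{r}$: since $\mathrm{e}(W_{\alpha,p},\emptyset)=1$, it suffices to choose $N=b^m$ (say $b=2$) large enough that both $\tfrac{b(s-1)+1}{(b-1)\sqrt{N}}\le\varepsilon$ and $\tfrac{s^{\alpha/p}}{N^{\alpha}}\le\varepsilon$, i.e.\ $N\ge\max\bigl((b(s-1)+1)^{2}(b-1)^{-2}\varepsilon^{-2},\,s^{1/p}\varepsilon^{-1/\alpha}\bigr)$; taking the smallest power of $2$ above this threshold at most doubles it, giving $N(\varepsilon,s)\le C\,s^{2}\varepsilon^{-\max(2,1/\alpha)}$, which is polynomial in $s$ and $\varepsilon^{-1}$ and automatically satisfies the hypothesis $s<N$. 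This final step is routine, but without it the corollary as stated is not established.
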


Remark~\ref{rem_one_dim} also applies for the rules constructed in this section. By choosing the exponents $j_i$ such that $\mathrm{gcd}(j_i, b^m-1) = 1$ for all $1 \le i \le s$, we obtain that the one-dimensional projections consist of the points $0, 1/b^m, 2/b^m, \ldots, (b^m-1)/b^m$.

\subsection*{Acknowledgment}

Josef Dick is supported by an ARC Queen Elizabeth II Fellowship. 

The author is grateful to the reviewer for pointing out the results by Korobov~\cite{Korobov} and Hua and Wang~\cite{HW}.

\end{document}